\documentclass[preprint]{jmd}
\usepackage{amssymb}
\usepackage{graphicx}
\UseLinks
\microtypecontext{spacing=nonfrench}
\hyphenation{Shar-kov-sky pro-po-si-ti-on}
\theoremstyle{plain}
\newtheorem{theorem}{Theorem}
\newtheorem{lemma}[theorem]{Lemma}
\newtheorem{proposition}[theorem]{Proposition}

\newtheorem{corollary}[theorem]{Corollary}

\theoremstyle{definition}
\newtheorem{definition}[theorem]{Definition}
\newtheorem{example}[theorem]{Example}
\newtheorem{remark}[theorem]{Remark}
\newtheorem*{question}{Problem}
\numberwithin{theorem}{section}

\def\etc.{\emph{et\thinspace c.}}
\def\Wloc{W_{\text{loc}}}
\DeclareMathOperator{\Id}{Id}

\def\R{\mathbb{R}}
\def\Q{\mathbb{Q}}

\def\N{\mathbb{N}}
\def\T{\mathbb{T}}

\def\Z{\mathbb{Z}}

\def\Dc{\mathcal{D}}

\let\emptyset\varnothing

\let\rest\restriction


\newcommand{\aeq}{\stackrel{\scriptscriptstyle\textrm{\normalfont a.e.\!}}{=}}
\newcommand{\aeto}{\xrightarrow{\scriptscriptstyle\textrm{\normalfont a.e.\!}}}

\newcommand{\st}{\mathbin{\text{$\phantom{|}$\vrule width.04em$\phantom{|}$}}}
\newcommand{\const}{\operatorname{const.}}%
\setcounter{tocdepth}{1}
\begin{document}
\title{Multiple mixing from weak hyperbolicity\breakhere by the Hopf argument}
\author{Yves Coud\`ene}
\email{yves.coudene@univ-brest.fr}
\address{Universit\'e de Bretagne Occidentale, 6 av.\ Victor Le Gorgeu, 29238 Brest CEDEX, France}
\author{Boris Hasselblatt}
\email{Boris.Hasselblatt@tufts.edu}
\address{Office of the Provost and Senior Vice
  President\\
Tufts University\\
Medford, MA 02155\\
USA}
\curraddr{Aix Marseille Universit\'e, CNRS, Centrale Marseille, Soci\'et\'e Math\'ematique de France, Centre International de Rencontres Math\'ematiques, UMS 822, and Institut de Math\'ematiques de Marseille, UMR 7373, 13453 Marseille, France}
\thanks{This work was partially supported by the
Committee on Faculty Research Awards of Tufts University and the Jean-Morlet Chair.}
\makeatletter
\def\st@artcaddr{\begingroup\punctuation\par\nobreak\indent{\itshape Address}\/:\space}
\makeatother
\author{Serge Troubetzkoy}
\email{serge.troubetzkoy@univ-amu.fr}
\address{Aix Marseille Universit\'e, CNRS, Centrale Marseille, I2M, UMR
  7373, 13453 Marseille, France}
\curraddr{I2M, Luminy, Case 907, F-13288 Marseille CEDEX 9, France}
\thanks{This work was partially supported by ANR Perturbations}
\Subjclass{37D20}{57N10}
\keywords{Anosov systems, mixing, multiple mixing, Hopf argument,
ergodicity, hyperbolicity}
\begin{abstract}
We show that using only weak hyperbolicity (no smoothness, compactness or
exponential rates) the Hopf argument produces multiple mixing in an
elementary way. While this recovers classical results with far simpler
proofs, the point is the broader applicability implied by the weak
hypotheses. Some of the results can also be viewed as establishing ``mixing
implies multiple mixing'' outside the classical hyperbolic context.
\end{abstract}
\maketitle
\section{Introduction}
The origins of hyperbolic dynamical systems are connected with the efforts
by Boltzmann and Maxwell to lay a foundation under statistical mechanics.
In today's terms their fundamental postulate was that the mechanical system
defined by molecules in a container is ergodic, and the difficulties of
establishing this led to the search for \emph{any} mechanical systems with
this property. The motion of a single free particle (also known as the
geodesic flow) in a negatively curved space emerged as the first and for a
long time sole class of examples with this property. Even here,
establishing ergodicity was subtle enough that initially this was only done
for constantly curved surfaces by using the underlying algebraic structure.
Eberhard Hopf was the first to go beyond this context, and his argument
remains the main tool for deriving ergodicity from hyperbolicity in the
absence of an algebraic structure (the alternative tool being the theory of
equilibrium states). Our purpose is to show how much more than ergodicity
it can produce. Specifically, in its original form the Hopf argument
establishes ergodicity when the contracting and expanding partitions of a
dynamical system are jointly ergodic. We present a recent refinement
originally due to Babillot that directly obtains mixing from joint
ergodicity of these two partitions. Further, we publicize the observation
that the argument produces multiple mixing if the stable partition is
ergodic by itself, and we give a simple proof of ergodicity of the stable
foliation. Taken together, this gives a simple, self-contained general
proof of multiple mixing of which \Ref{CORAbstractMMix} is a prototype.

Here is how the results in this paper can be applied together. Use the Hopf
argument (\Ref{SApplications}) to establish mixing (or just total
ergodicity), deduce that the stable partition is ergodic
(\Ref{SThouvenot}), then apply the one-sided Hopf argument
(\Ref{SHopfArgument}) to obtain multiple mixing.
We remark that our proofs are self-contained, quite short and do not use
compactness, differentiability, or exponential behavior; nor are the $W^i$
assumed to consist of manifolds. The step from ergodicity to multiple
mixing does not need the full force of the usual notions of local product
structure and absolute continuity. Indeed, in our applications to
billiards (\Ref{CORBilliardsMMixing}) and partially hyperbolic
dynamical systems (\Ref{THMBurnsWilkinson}), more information is available
than needed for our results.\looseness-1

Hyperbolic dynamical systems on compact spaces enjoy even stronger stochastic properties,
such as the Kolmogorov property and being measurably isomorphic to a
Bernoulli system \cite[Theorem 4.1]{Bowen}. Our purpose is to show how much
follows from just the Hopf argument.

We conclude this introduction with the Hopf argument for ergodicity.
Consider a metric space $X$ with a Borel probability measure $\mu$ and a
$\mu$-preserving transformation $f\colon X\to X$. The stable partition
of\/ $f$ is defined by
\begin{equation}\label{eqDefStableSet}
W^{ss}(x)\dfn\{y\in X\st d(f^n(x),f^n(y))\lto{n\to+\infty}0\}
\end{equation} 
\begin{definition}\Label{DEFWsSubordinate}
$\varphi\colon X\to\R$ is \emph{subordinate} to $W^{ss}$ or \emph{$W^{ss}$-saturated} if there is a set $G\subset X$ with $\mu(G)=1$ such that
$x,y\in G$ and $y\in W^{ss}(x)$ imply $\varphi(x)=\varphi(y)$. 
\end{definition}
\begin{remark}
In this case $\varphi^s(x)\dfn\begin{cases}0&\text{if }W^{ss}(x)\cap
G=\emptyset\\\varphi(y)&\text{if }y\in G\cap W^{ss}(x)\end{cases}\aeq\varphi$ is
(everywhere!) constant on stable sets.
\end{remark}
\begin{theorem}[Hopf Argument]\Label{THMHopfCoudene}\index{Hopf!argument}
If\/ $(X,\mu)$ is a metric Borel probability space, $f\colon X\to X$
$\mu$-preserving, then any $f$-invariant $\varphi\in L^p(\mu)$ is
$W^{ss}$-saturated.
\end{theorem}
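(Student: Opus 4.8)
The proof is the classical Hopf argument, reorganized so that the only nontrivial input is Birkhoff's ergodic theorem. Since $\mu$ is a probability measure, $L^p(\mu)\subset L^1(\mu)$, so it is enough to treat $\varphi\in L^1(\mu)$; and since $\varphi$ is $f$-invariant, $\frac1n\sum_{k=0}^{n-1}\varphi\circ f^k=\varphi$ holds $\mu$-almost everywhere for every $n$.

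First I would establish the conclusion for a \emph{bounded uniformly continuous} function $\psi$ in place of $\varphi$, without assuming invariance. By Birkhoff's theorem there is a set $B_\psi$ with $\mu(B_\psi)=1$ on which the forward average $\psi^+(x):=\lim_{n\to\infty}\frac1n\sum_{k=0}^{n-1}\psi(f^k(x))$ exists. The elementary point is that for \emph{every} pair $x$ and $y\in W^{ss}(x)$ one has $\frac1n\sum_{k=0}^{n-1}\bigl(\psi(f^k(x))-\psi(f^k(y))\bigr)\to0$ as $n\to\infty$: indeed $d(f^k(x),f^k(y))\to0$, so uniform continuity gives $\psi(f^k(x))-\psi(f^k(y))\to0$, and a sequence tending to $0$ has Ces\`aro averages tending to $0$. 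Hence, if $x,y\in B_\psi$ and $y\in W^{ss}(x)$, then $\psi^+(x)=\psi^+(y)$; in other words $\psi^+$ is $W^{ss}$-saturated, with witness set $B_\psi$.

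Next I would pass to the given $\varphi$ by $L^1$-approximation. Bounded Lipschitz functions (which are uniformly continuous) are dense in $L^1(\mu)$ for a finite Borel measure on a metric space; this is where a little care is needed, but it is standard: indicators of closed sets $F$ are decreasing pointwise limits of the bounded Lipschitz functions $x\mapsto\max(0,1-k\,d(x,F))$, and closed sets approximate Borel sets in measure. Pick bounded uniformly continuous $\psi_j\to\varphi$ in $L^1(\mu)$. The operators $T_ng:=\frac1n\sum_{k=0}^{n-1}g\circ f^k$ are $L^1$-contractions because $f$ preserves $\mu$; moreover $T_n\psi_j\to\psi_j^+$ in $L^1$ (dominated convergence, $\psi_j$ being bounded), while $T_n\varphi=\varphi$ by invariance. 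Therefore $\|\psi_j^+-\varphi\|_{L^1}=\lim_n\|T_n\psi_j-T_n\varphi\|_{L^1}\le\|\psi_j-\varphi\|_{L^1}\to0$, and after passing to a subsequence I may assume $\psi_j^+\to\varphi$ $\mu$-almost everywhere.

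Finally I would assemble the pieces: put $G:=\bigl(\bigcap_jB_{\psi_j}\bigr)\cap\{x:\psi_j^+(x)\to\varphi(x)\}$, so $\mu(G)=1$. If $x,y\in G$ and $y\in W^{ss}(x)$, then $\psi_j^+(x)=\psi_j^+(y)$ for every $j$ by the second step, and letting $j\to\infty$ yields $\varphi(x)=\varphi(y)$. Thus $\varphi$ is $W^{ss}$-saturated. The one place that genuinely requires attention is keeping \emph{both} $x$ and $y$ inside all the relevant full-measure sets; this is handled precisely by comparing the identity $T_n\psi_j(x)-T_n\psi_j(y)\to0$, valid for \emph{all} $x$ and $y\in W^{ss}(x)$, with the almost-everywhere convergence of $T_n\psi_j$ and of $\psi_j^+$ rather than manipulating $\psi_j^+$ directly. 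Everything else is Birkhoff's ergodic theorem together with routine approximation.
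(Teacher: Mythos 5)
Your proof is correct, but it follows the classical route through Birkhoff averages rather than the paper's Luzin-based shortcut. You prove saturation first for the forward time-averages $\psi^+$ of bounded uniformly continuous approximants (using that Ces\`aro means of a null sequence are null), then transfer it to $\varphi$ via the $L^1$-contraction property of $T_n$ and the identity $T_n\varphi=\varphi$; the needed ingredients are the density of bounded uniformly continuous functions in $L^1(\mu)$ and one application of Birkhoff per approximant. The paper instead applies Luzin's theorem to $\varphi$ itself to get sets $F_k$ of measure $>1-2^{-k}$ on which $\varphi$ is uniformly continuous, uses Birkhoff only for the indicator $\chi_{F_k}$ to show that almost every point visits $F_k$ with density $>\tfrac12$, and then, for $y\in W^{ss}(x)$ with both visit densities exceeding $\tfrac12$, finds common times $n_i\to\infty$ with $f^{n_i}(x),f^{n_i}(y)\in F_k$ and concludes directly from invariance that $\varphi(x)-\varphi(y)=\varphi(f^{n_i}(x))-\varphi(f^{n_i}(y))\to0$. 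The paper's version is shorter and makes the mechanism (simultaneous returns to a set of uniform continuity) explicit, avoiding the approximation of $\varphi$ by globally uniformly continuous functions; your version is the standard textbook argument and has the mild advantage of isolating a statement (saturation of $\psi^+$ for every uniformly continuous $\psi$, invariant or not) that is reusable elsewhere. Both are complete; in particular your handling of the full-measure witness set by intersecting the countably many sets $B_{\psi_j}$ with the set of pointwise convergence $\psi_j^+\to\varphi$ is sound.
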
\def\CHIfFk{\tau_{F_k}}
\begin{proof}
The Luzin Theorem gives $F_k\subset X$
with $\mu(X\smallsetminus
F_k)<2^{-k}$ and $\varphi\rest{F_k}$ uniformly continuous. If $E_k\dfn\big\{x\in
X\st\frac12<\CHIfFk\dfn\lim\limits_{N\to\infty}\frac1N\#\{0\le
n<N\mid f^n(x)\in{F_k}\}\big\}$, then, using the Birkhoff ergodic theorem,
\[
\mu(X\smallsetminus E_k)=
2\int_{X\smallsetminus E_k}\hskip-.7em\nicefrac12
\le
2\int_{X\smallsetminus E_k}1-\CHIfFk
\le
2\int1-\CHIfFk
=
2\int\chi_{X\smallsetminus F_k}
=2\mu(X\smallsetminus F_k)<2^{1-k}\llap,
\]
while for $x,y\in E_k$ there are $n_i\lto{i\to\infty}\infty$ with
$\{f^{n_i}(x),f^{n_i}(y)\}\subset F_k$, since each has density $> 1/2$. If furthermore $y\in W^{ss}(x)$ and $\varphi$ is $f$-invariant, then
$\varphi(x)-\varphi(y)=\varphi(f^{n_i}(x))-\varphi(f^{n_i}(y))\lto{i\to\infty}0$, which
proves the claim on 
$\bigcup_{n\in\N}\bigcap_{k\ge n}E_k\aeq X$.
\end{proof}
If\/ $f$ is invertible, then we define
\[
W^{su}(x)\dfn\{y\in X\st d(f^{-n}(x),f^{-n}(y))\lto{n\to+\infty}0\},
\]
to get
\begin{theorem}\Label{PRPHopfErgodic}
If\/ $(X,\mu)$ is a metric Borel probability space, $f\colon X\to X$
invertible $\mu$-preserving, then any $f$-invariant $\varphi\in L^2(\mu)$
is $W^{ss}$- and $W^{su}$-saturated.
\end{theorem}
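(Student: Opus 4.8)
The plan is to obtain this immediately from the Hopf Argument (\Ref{THMHopfCoudene}), applied once to $f$ and once to $f^{-1}$.

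First, since $\varphi\in L^2(\mu)\subset L^1(\mu)$ and $\varphi$ is $f$-invariant, \Ref{THMHopfCoudene} (with $p=2$) furnishes a set $G^s\subset X$ with $\mu(G^s)=1$ such that $x,y\in G^s$ and $y\in W^{ss}(x)$ force $\varphi(x)=\varphi(y)$; in other words, $\varphi$ is $W^{ss}$-saturated.

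Next I would record three elementary observations. The map $f^{-1}\colon X\to X$ is again invertible and $\mu$-preserving. The function $\varphi$ is $f^{-1}$-invariant, since $\varphi\circ f=\varphi$ yields $\varphi\circ f^{-1}=\varphi$. And the stable partition of $f^{-1}$ is, by its very definition, the set $\{y\in X : d(f^{-n}(x),f^{-n}(y))\to 0 \text{ as } n\to+\infty\}=W^{su}(x)$. Hence applying \Ref{THMHopfCoudene} to $f^{-1}$ produces a set $G^u\subset X$ with $\mu(G^u)=1$ witnessing that $\varphi$ is $W^{su}$-saturated.

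Since the statement asks only for saturation with respect to each of the two partitions separately, this finishes the proof; should one want a single full-measure set on which $\varphi$ is constant along both $W^{ss}$ and $W^{su}$, simply take $G^s\cap G^u$, which still has measure one. I do not expect any genuine obstacle: all the work already resides in \Ref{THMHopfCoudene}, and the only points needing (minimal) attention are the identification of the stable partition of $f^{-1}$ with $W^{su}$ for $f$, and the harmless inclusion $L^2(\mu)\subset L^1(\mu)$ that makes the hypothesis of \Ref{THMHopfCoudene} applicable.
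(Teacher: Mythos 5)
Your proof is correct and is exactly the paper's route: the paper treats \Ref{PRPHopfErgodic} as the ``trivial step'' from \Ref{THMHopfCoudene}, obtained by observing that an $f$-invariant function is $f^{-1}$-invariant and that the stable partition of $f^{-1}$ is $W^{su}$. The only superfluous remark is the appeal to $L^2\subset L^1$; since \Ref{THMHopfCoudene} is stated for $L^p$, taking $p=2$ applies directly.
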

\begin{definition}
Let $f\colon X\to X$ be a Borel-measurable map of a metric space $X$. An
$f$-invariant Borel probability measure $\mu$ is said to be ergodic (or $f$
to be \emph{ergodic} with respect to $\mu$) if every $f$-invariant
measurable set is either a null set or the complement of one. Equivalently,
every bounded measurable $f$-invariant function $\varphi$ is constant a.e.:
$\varphi\circ f=\varphi\Rightarrow\varphi\aeq\const$
\end{definition}
By analogy and as the link between \Ref{PRPHopfErgodic} and ergodicity we define
\begin{definition}
$W^{ss}$, $W^{su}$ are said to be \emph{jointly ergodic} if 
\[
\varphi\in L^2(\mu)
,\ W^{ss}\text{-saturated and }W^{su}\text{-saturated}\Rightarrow\varphi\aeq\const
\]
\end{definition}
\begin{theorem}\Label{CORHopfErgodic}
If\/ $(X,\mu)$ is a metric Borel probability space, $f\colon X\to X$
invertible $\mu$-preserving, $W^{ss}, W^{su}$ \emph{jointly ergodic},
then $f$ is ergodic.
\end{theorem}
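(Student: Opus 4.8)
The plan is to obtain \Ref{CORHopfErgodic} as an essentially immediate consequence of \Ref{PRPHopfErgodic} together with the two definitions just introduced; the ``proof'' is a short chain of implications rather than a new argument.

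First I would recall that to verify ergodicity it suffices to show that every $f$-invariant $\varphi\in L^2(\mu)$ is a.e.\ constant: for an $f$-invariant measurable set $A$ the indicator $\chi_A$ is a bounded, hence $L^2$, $f$-invariant function, and $\chi_A\aeq\const$ forces $\mu(A)\in\{0,1\}$. So fix such a $\varphi$. Since $f$ is invertible and $\mu$-preserving, \Ref{PRPHopfErgodic} applies and shows that $\varphi$ is simultaneously $W^{ss}$-saturated and $W^{su}$-saturated. The hypothesis that $W^{ss}$ and $W^{su}$ are jointly ergodic then says precisely that such a $\varphi$ satisfies $\varphi\aeq\const$, which is what we wanted.

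I do not expect a genuine obstacle: all of the substance is already in \Ref{THMHopfCoudene}/\Ref{PRPHopfErgodic} (the Luzin--Birkhoff argument), and ``jointly ergodic'' was defined exactly so as to make this last step automatic. The only matters requiring care are bookkeeping ones: invertibility is what makes the $W^{su}$-saturation half of \Ref{PRPHopfErgodic} available, one should argue with $L^2$ invariant functions and then reduce from invariant sets rather than conversely, and --- if one insists on the a.e.\ formulations of invariance --- an a.e.-invariant function should be replaced off a null set by a genuinely $f$-invariant representative before \Ref{PRPHopfErgodic} is invoked.
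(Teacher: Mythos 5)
Your proposal is correct and is exactly the route the paper intends: \Ref{CORHopfErgodic} is an immediate consequence of \Ref{PRPHopfErgodic} together with the definition of joint ergodicity, which was formulated precisely to make this step automatic (the paper accordingly gives no separate proof). Your bookkeeping remarks --- reducing ergodicity to constancy of invariant $L^2$ functions via indicators, and passing to a genuinely invariant representative before invoking \Ref{PRPHopfErgodic} --- are the right points of care and match the paper's conventions.
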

Although this paper gives a substantial strengthening of this classical
conclusion, we note a well-known simple one that is not often made
explicit. Since joint ergodicity is unaffected if we replace $f$ by $f^n$,
we actually have
\begin{theorem}\Label{THMHopfErgodic}
If\/ $(X,\mu)$ is a metric Borel probability space, $f\colon X\to X$
invertible $\mu$-preserving, $W^{ss}, W^{su}$ \emph{jointly ergodic}, then
$f$ is totally ergodic.
\end{theorem}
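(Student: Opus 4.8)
The plan is to verify total ergodicity by showing that for every $n\in\N$ the stable and unstable partitions of $f^n$ are again jointly ergodic, and then simply to quote \Ref{CORHopfErgodic} for $f^n$. All the content is in comparing the partitions of $f$ with those of $f^n$; this is, as already suggested above, just the remark that joint ergodicity is insensitive to passing to a power, made precise.

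First I would fix $n\in\N$ and introduce the stable and unstable partitions of $f^n$, namely $W^{ss}_{f^n}(x)=\{y\in X\st d(f^{nk}(x),f^{nk}(y))\lto{k\to+\infty}0\}$ and the analogous $W^{su}_{f^n}$. The elementary point is the inclusion $W^{ss}(x)\subseteq W^{ss}_{f^n}(x)$ for every $x$ --- if $d(f^k(x),f^k(y))\to0$ as $k\to+\infty$ then in particular this holds along the subsequence of multiples of $n$ --- and similarly $W^{su}(x)\subseteq W^{su}_{f^n}(x)$.

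Next I would check that $W^{ss}_{f^n}$ and $W^{su}_{f^n}$ are jointly ergodic. Let $\varphi\in L^2(\mu)$ be $W^{ss}_{f^n}$-saturated and $W^{su}_{f^n}$-saturated, and intersect the two witnessing full-measure sets into a single $G$ with $\mu(G)=1$. Because $W^{ss}(x)\subseteq W^{ss}_{f^n}(x)$, any $y\in G\cap W^{ss}(x)$ with $x\in G$ satisfies $y\in W^{ss}_{f^n}(x)$, so the same $G$ shows that $\varphi$ is $W^{ss}$-saturated for $f$; likewise $\varphi$ is $W^{su}$-saturated for $f$. The joint ergodicity hypothesis on $W^{ss},W^{su}$ then forces $\varphi\aeq\const$, so $W^{ss}_{f^n}$ and $W^{su}_{f^n}$ are indeed jointly ergodic. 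Since $f^n$ is invertible and $\mu$-preserving, \Ref{CORHopfErgodic} applied to $f^n$ yields that $f^n$ is ergodic; as $n$ was arbitrary, $f$ is totally ergodic.

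There is no real analytic obstacle here --- the only thing to get right is the direction of the inclusion and hence of the implication. Saturation with respect to the \emph{coarser} partitions $W^{ss}_{f^n},W^{su}_{f^n}$ of $f^n$ implies saturation with respect to the \emph{finer} partitions $W^{ss},W^{su}$ of $f$, which is exactly why the hypothesis stated for $f$ suffices to conclude for every $f^n$; the converse direction would be false in general (without uniform hyperbolicity one cannot expect $W^{ss}_{f^n}(x)=W^{ss}(x)$), and it is not needed. Everything else is the bookkeeping already carried out in the proofs of \Ref{THMHopfCoudene} and \Ref{CORHopfErgodic}.
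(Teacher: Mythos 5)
Your proof is correct and is precisely the argument the paper intends: the paper dispenses with it in one line (``joint ergodicity is unaffected if we replace $f$ by $f^n$'') and you have simply made that remark precise, in the one direction that is actually needed, namely that $W^{ss}(x)\subseteq W^{ss}_{f^n}(x)$ forces $W^{ss}_{f^n}$-saturation to imply $W^{ss}$-saturation, before invoking \Ref{CORHopfErgodic} for $f^n$. Your closing caveat that only this direction is required (and that equality of the partitions may fail absent uniformity) is a worthwhile clarification of the paper's looser phrasing.
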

Here
\begin{definition}
$f$ is said to be \emph{totally ergodic} if $f^n$ is ergodic for all
  $n\in\N^*$.
\end{definition}
\begin{remark}
This is equivalent to having no roots of unity in the spectrum of the
associated Koopman operator on $L^2$ and to having no adding machine or
permutation on a finite set as a factor \cite[p.\ 119]{Glasner}. We can, of
course, conclude in \Ref{THMHopfErgodic} that $f^n$ is ergodic for
$n\in\Z\smallsetminus\{0\}$.
\end{remark}
We can now state more explicitly the objectives of this paper. 
\begin{itemize}
\item Explain how joint ergodicity of the partitions implies more than total
ergodicity of $f$, namely mixing (\Ref{CORHopfCoudeneMixing}).
\item Give a nontrivial application (to partially hyperbolic dynamical
  systems, \Ref{THMBurnsWilkinson}).
\item Explain how the stronger assumption of ergodicity of $W^{ss}$ (alone)
  implies even more, namely multiple mixing (\Ref{CORWsergodicimpliesmixing}).
\item Establish criteria for ergodicity of $W^{ss}$ (\Ref{THMAbstractMMix}).
\item Give nontrivial applications (\eg to billiards, \Ref{CORBilliardsMMixing}).
\end{itemize}
The approach that improves ergodicity to mixing and multiple mixing gives
rise to a question which we make explicit here by way of previewing the
approach.

For establishing ergodicity, there is the trivial step from
\Ref{THMHopfCoudene} to \Ref{PRPHopfErgodic} ($f$-invariant functions are
$f^{-1}$-invariant). For establishing mixing, this is echoed below in the
nontrivial step from \Ref{THMHopfCoudeneMixing} for $N=1$ to
\Ref{THMHopfCoudeneMixing2sided} (weak accumulation points of\/
$\varphi\circ f^n$ are $W^{ss}$- \emph{and} $W^{su}$-saturated) which is
originally due to Babillot. We have no corresponding step for establishing
multiple mixing from joint ergodicity of $W^{ss}$ and $W^{su}$, that is, we
do not know how to go from \Ref{THMHopfCoudeneMixing} for $N>1$ to a
corresponding statement about $W^{ss}$- \emph{and} $W^{su}$-saturation.
\begin{question}
If\/ $X$ is a metric space, $f\colon X\to X$, $\mu$ an $f$-invariant Borel
probability measure, $\varphi_i\in L^2(\mu)$, then is any weak accumulation
point of\/ $\prod_{i=1}^N\varphi_i\circ f^{\sum_{j=1}^in_j}$ with
$n_i\lto{n\to\infty}\infty$ $W^{ss}$-saturated \emph{and}
$W^{su}$-saturated?
\end{question}
An affirmative answer would say that in our results that conclude
``mixing'' one does, in fact, have multiple mixing.
\subsection{Examples}
\begin{example}\Label{EXNotMixing}
Transformations of the form $f\times(-1)\colon X\times\{1,-1\}\to
X\times\{1,-1\}$, $(x,y)\mapsto(f(x),-y)$ are not mixing regardless of the
ergodic properties of\/ $f$.
While in this case the finitary reduction given
by the return map to $X\times\{1\}$ may produce a mixing transformation,
the corresponding counterpart for flows is a suspension, in which the
absence of mixing is deemed substantial.
\end{example}
While the dynamical systems in which we are interested are
differentiable---either diffeomorphisms or flows---our interest is in the
ergodicity and related properties of Borel probability measures invariant
under the dynamical system. In the mechanical (that is, Hamiltonian) case,
this would, for instance be the so-called Liouville volume. We mentioned
geodesic flows as the original motivating examples, and we now add others
to our discussion. For all of these we will prove multiple mixing 
via the Hopf argument, that is, without recourse to sophisticated results
from entropy theory and the theory of measurable partitions in the context
of hyperbolic dynamical systems.
\begin{example}\Label{EX2111}
The action of\/ $\left(\begin{smallmatrix}2&1\\1&1\end{smallmatrix}\right)$ on $\R^2$ projects to an area-preserving diffeomorphism
$F_{\left(\begin{smallmatrix}2&1\\1&1\end{smallmatrix}\right)}\colon\T^2=\R^2/\Z^2\to\T^2$.
Distances on lines parallel to the eigenline $y=\dfrac{\sqrt{5}-1}{2}x$ for
the eigenvalue $\lambda_1=\dfrac{3+\sqrt{5}}{2}>1$ are expanded by a factor
$\lambda_1$. Similarly, the lines $y=\dfrac{-\sqrt{5}-1}{2}x+\const$
contract by $\lambda_1^{-1}=\lambda_2=\dfrac{3-\sqrt{5}}{2}<1$. 
\end{example}
\begin{example}\Label{EXGenHypAutomorphism}
More generally, any $A\in GL(m,\Z)$ induces an automorphism $F_A$ of\/
$\T^m$ that preserves Lebesgue measure. We say that it is hyperbolic if\/ $A$
has no eigenvalues on the unit circle.
\end{example}
\begin{example}[{\cite[p.\ 104]{Walters2},\cite[p.\ 49]{Walters}}]\Label{Waltersautomorphism}
Likewise, $W\dfn
\left(\begin{smallmatrix}
0&0&0&-1\\
1&0&0&\phantom-8\\
0&1&0&-6\\
0&0&1&\phantom-8
\end{smallmatrix}\right)$ induces a volume-preserving automorphism $F_W$ of\/ $\T^4$.
The eigenvalues $2-\sqrt3\pm i\sqrt{4\sqrt3-6}$ lie on the unit circle and
the eigenvalues $\lambda_\pm=2+\sqrt3\pm\sqrt{2(3+2 \sqrt3)}\in\R$
satisfy $0<\lambda_-<1<\lambda_+$. $F_W$ is thus
\emph{partially hyperbolic}. The components of the
eigenvectors
\[
v^\pm\dfn(-2-\sqrt3\pm\sqrt{2(3+2\sqrt3)},3\mp2\sqrt{2(-3+2\sqrt3)},-6+\sqrt3\pm\sqrt{2(3+2\sqrt3)},1)
\]
are independent over $\Q$, \ie generate a 4-dimensional vector space over $\Q$.
\end{example}
\begin{example}[{\cite[p.\ 67]{ChernovMakarian}}]\Label{EXDispersing}
A billiard $\Dc\subsetneq\T^2$ is said to be \emph{dispersing} if it is
defined by reflection in the boundary of smooth strictly convex
``scatterers.''\footnote{One can allow corners at considerable expense of
  additional effort \cite[p.\ 69]{ChernovMakarian}.} If it has no corners
or cusps, then Sinai's Fundamental Theorem of the theory of dispersing
billiards \cite{BunimovichSinai,Sinai}, see also \cite[Theorem
  5.70]{ChernovMakarian}, establishes hyperbolic behavior of the billiard
map.
\end{example}
\begin{example}\Label{EXPolygonalPockets}
Sinai's Fundamental Theorem also applies to \emph{polygonal billiards with
  pockets}. These are noncircular billiards obtained from a convex polygon
as follows: for each vertex add a disk whose interior contains this vertex
and none other \cite[Theorem 4.1]{ChernovTroubetzkoy}.
\end{example}
\begin{example}\Label{EXKatokMap}
The \emph{Katok map} is a totally ergodic area-preserving deformation of
$F_{\left(\begin{smallmatrix}2&1\\1&1\end{smallmatrix}\right)}$ that is on
the boundary of the set of Anosov diffeomorphisms (hence not uniformly
hyperbolic) and whose stable and unstable partitions are homeomorphic to
those of $F_{\left(\begin{smallmatrix}2&1\\1&1\end{smallmatrix}\right)}$
\cite[\S1.3]{BarreiraPesin13}, \cite[\S6.3]{BarreiraPesin07},
\cite[\S2.2]{Katok}, \cite{PesinSentiZhang}.
\end{example}
\subsection{Ergodicity and related notions}
Since the \emph{time-averages} or \emph{Birkhoff averages}
$\frac1n\sum_{i=0}^{n-1}\varphi\circ f^i$ converge a.e. (Birk\-hoff
Pointwise Ergodic Theorem) and in $L^2$ (von Neumann Mean Ergodic Theorem),
ergodicity is equivalent to time averages coinciding with space averages
($\int\varphi$); this conclusion was the actual object of the
Maxwell--Boltzmann Ergodic Hypothesis. The motivation is that such
functions $\varphi$ represent \emph{observables} by associating to each
state of the system (each point in the domain of the dynamical system) a
number that might be the result of an experimental measurement. We note
that in this context we can use all\/ $L^p$ spaces ($p\in[1,\infty]$)
interchangeably: for any $p\in[1,\infty]$ ergodicity of $f$ is equivalent
to $f$-invariant $L^p$ functions being constant.

A simple nontrivial example of an ergodic transformation is $x\mapsto
x+\alpha\pmod1$ on $S^1=\R/\Z$ for irrational\/ $\alpha$ (Kronecker--Weyl
Equidistribution Theorem \cite[Proposition 4.2.1]{KatokHasselblatt}). The
preceding examples are also ergodic (with respect to the area measure), but
unlike an irrational circle rotation, they have stronger stochastic
properties, and the aim of this note is to show that the Hopf argument
yields them.

A colloquial motivation for these is that if\/ $\varphi$ represents the sugar
concentration in a cup with a lump of sugar, then rotation of the cup does
little to mix (and dissolve) the sugar.
\begin{definition}
An $f$-invariant Borel probability measure is said to be \emph{mixing} if
two observables become asymptotically independent or uncorrelated when
viewed as random variables: 
\begin{equation}\label{eqMixing}
\int\varphi\circ f^n\;\psi\lto{n\to\infty}\int\varphi\int\psi\quad\text{for
all}\quad\varphi,\psi\in L^2.
\end{equation}
Equivalently, 
\begin{equation}\label{eqMixingWeak}
\varphi\circ f^n\xrightarrow[n\to\infty]{\text{ \tiny\normalfont weakly }}\const\quad\text{for all}\quad\varphi\in L^2.
\end{equation}
\end{definition}
With test function $\psi\equiv1$ in \eqref{eqMixing}, the left-hand side is
independent of\/ $n$, which shows that the constant on the right-hand side of
\eqref{eqMixingWeak} is $\int\varphi$.
\begin{definition}
$\mu$ is said to be \emph{multiply mixing} if it is \emph{$N$-mixing} for all\/ $N\in\N$:
For $\varphi_1,\dots,\varphi_N\in L^\infty$ and any $L^2$-weak neighborhood
$U$ of (the constant function) $\prod_{i=1}^N\int\varphi_i\,d\mu$ there is
a $K\in\R$ such that $\prod_{i=1}^N\varphi_i\circ f^{\sum_{j=1}^in_j}\in U$
whenever $n_i\ge K$ for $1\le i\le N$. In short,
\[
\prod_{i=1}^N\varphi_i\circ f^{\sum_{j=1}^in_j}\xrightarrow[n_i\to\infty]{\text{$L^2$-weakly }}\prod_{i=1}^N\int\varphi_i\,d\mu\quad\text{for}\quad\varphi_i\in L^\infty.
\]
\end{definition}
Made explicit with test function $\varphi_0$, this means that $N+1$
observables become asymptotically independent as the time gaps between them
go to infinity. Here, the left-hand side is
parametrized by $\Z^N$, and the assertion can be checked by considering
sequences $\psi_k=\prod_{i=1}^N\varphi_i\circ f^{\sum_{j=1}^in_j(k)}$ with $n_i(k)\lto{k\to\infty}\infty$ and
$\psi_k\xrightarrow[k\to\infty]{\text{ \tiny\normalfont weakly }}\psi$; then $\psi$ is an
accumulation point, and we describe these as ``weak accumulation points
$\psi_k\xrightarrow[k\to\infty]{\text{ \tiny\normalfont weakly }}\psi$ of\/
$\prod_{i=1}^N\varphi_i\circ f^{\sum_{j=1}^in_j(k)}$ with
$n_i(k)\lto{k\to\infty}\infty$'' or as ``weak accumulation points of\/
$\prod_{i=1}^N\varphi_i\circ f^{\sum_{j=1}^in_j}$ as
$n_i\to\infty$.'' $N$-mixing means that for
$\varphi_i\in L^\infty$ there is only one weak accumulation point
of\/
$\prod_{i=1}^N\varphi_i\circ f^{\sum_{j=1}^in_j}$ with
$n_i\to\infty$, and it is
$\prod_{i=1}^N\int\varphi_i\,d\mu$.
\begin{proposition}\Label{PRPMultiMixLemma}
An $f$-invariant Borel probability measure $\mu$ is $N$-mixing iff given any $\varphi_i\in L^2(\mu)$, any weak accumulation point
of\/
$\prod_{i=1}^N\varphi_i\circ f^{\sum_{j=1}^in_j}$ with
$n_i\to\infty$ is constant.
\end{proposition}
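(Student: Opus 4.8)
The plan is to prove the two implications separately; the forward one merely repackages the definition, whereas the converse carries the content. For the forward implication, assume $\mu$ is $N$-mixing. By the description of $N$-mixing recalled just above the statement, for $\varphi_i\in L^\infty$ the sequence $\prod_{i=1}^N\varphi_i\circ f^{\sum_{j=1}^in_j}$ with $n_i\to\infty$ has a single weak accumulation point, namely the constant $\prod_{i=1}^N\int\varphi_i\,d\mu$, and in particular every weak accumulation point is constant. To allow general $\varphi_i\in L^2$, I would approximate each $\varphi_i$ in $L^2$ by its truncations to $[-m,m]$, which are bounded and which, by Cauchy--Schwarz, perturb every pairing $\int g\cdot\prod_i\varphi_i\circ f^{\sum n_j}\,d\mu$ by an amount bounded uniformly in the $n_j$ and tending to $0$ with $m$.

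For the converse, assume that every weak accumulation point of such a product of $L^2$ functions is constant. I would first note that the same hypothesis then holds with $N$ replaced by any $M\le N$: given $\varphi_1,\dots,\varphi_M$, append $\varphi_{M+1}=\cdots=\varphi_N\equiv1$, so that the $N$-fold product equals the $M$-fold one and does not depend on $n_{M+1},\dots,n_N$; hence its weak accumulation points as $n_1,\dots,n_M\to\infty$ are exactly those of the $N$-fold product as all $n_i\to\infty$, and so constant. Now fix $\varphi_1,\dots,\varphi_N\in L^\infty$ and a sequence $\psi_k=\prod_{i=1}^N\varphi_i\circ f^{\sum_{j=1}^in_j(k)}$ with $n_i(k)\to\infty$. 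It is bounded in $L^\infty$, hence in the Hilbert space $L^2(\mu)$, so every subsequence has a further subsequence converging weakly in $L^2$, and by hypothesis every such limit is a constant.

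The heart of the argument is to identify that constant as $\prod_{i=1}^N\int\varphi_i\,d\mu$, which I would do by induction on the number $M$ of factors. For $M=1$: if $\varphi_1\circ f^{n_1(k)}$ converges weakly to a constant $c$, then testing against $1$ and using $f$-invariance of $\mu$ gives $c=\lim\int\varphi_1\circ f^{n_1(k)}\,d\mu=\int\varphi_1\,d\mu$. For $M\ge2$: suppose $\psi_{k_j}$ converges weakly to a constant $c$, and set $m_i(k)=\sum_{j=1}^in_j(k)$. By $f$-invariance, $\int\psi_{k_j}\,d\mu=\int\varphi_1\cdot\prod_{i=2}^M\varphi_i\circ f^{m_i(k_j)-m_1(k_j)}\,d\mu$, and after re-indexing, the product $\prod_{i=2}^M\varphi_i\circ f^{m_i(k_j)-m_1(k_j)}$ has the form $\prod_{l=1}^{M-1}\varphi_{l+1}\circ f^{\sum_{r=1}^l n_{r+1}(k_j)}$ with all $n_{r+1}(k_j)\to\infty$. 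Being bounded in $L^\infty$, it converges weakly in $L^2$ along a further subsequence, and by the case of $M-1$ factors (available by the padding remark) together with the inductive hypothesis, its limit is the constant $\prod_{i=2}^M\int\varphi_i\,d\mu$. Since $\varphi_1\in L^2$, passing to that subsequence in that identity yields $c=\lim\int\psi_{k_j}\,d\mu=\int\varphi_1\,d\mu\cdot\prod_{i=2}^M\int\varphi_i\,d\mu=\prod_{i=1}^M\int\varphi_i\,d\mu$.

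Therefore every weak accumulation point of $(\psi_k)$ is the \emph{same} constant $\prod_{i=1}^N\int\varphi_i\,d\mu$; a bounded sequence in a Hilbert space all of whose weak accumulation points coincide converges weakly, so $\psi_k$ converges weakly in $L^2$ to $\prod_{i=1}^N\int\varphi_i\,d\mu$. Since every sequence with $n_i(k)\to\infty$ behaves this way, this is precisely $N$-mixing in its sequential form, equivalent to the neighborhood form as recalled above. The step I expect to be the main obstacle is exactly this identification of the constant: it is what forces the induction on the number of factors and the use of $f$-invariance to peel off the outermost factor, and it is where one has to remember that, although a weak $L^2$-limit of a product of $L^2$ functions need not be an $L^2$ function, all the sequences actually handled here are products of bounded functions and hence stay in a bounded---thus weakly sequentially compact---subset of $L^2$.
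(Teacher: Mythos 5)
Your proof is correct and follows essentially the same route as the paper: pad with the constant function $1$ to reduce the number of factors, use $f$-invariance of $\mu$ to peel off a factor and identify the accumulation constant recursively as $\prod_i\int\varphi_i\,d\mu$, and conclude by weak sequential compactness of bounded sets in $L^2$. The only difference is cosmetic (top-down induction versus the paper's bottom-up recursion, plus your truncation remark for the ``only if'' direction, which the paper simply declares clear).
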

\begin{proof}
``Only if'' is clear. To prove ``if'', we recursively determine the
constant.

First, take $\varphi_i\equiv1$ for $i\neq1$, including taking the test
function $\varphi_0\equiv1$. Then the weak-accumulation statement becomes
\[
\int\varphi_1=\int\varphi_1\circ f^{n_1}\cdot1\to\const\int1=\const,
\]
so the constant is $\int\varphi_1$ for each such subsequence, and thus
$\varphi_1\circ f^{n_1}\xrightarrow[n_1\to\infty]{\text{ weakly
}}\int\varphi_1$. By symmetry, $\varphi_i\circ f^{n_i}\xrightarrow[n_i\to\infty]{\text{ weakly
}}\int\varphi_i$ for all\/ $i$. 
Next, if $\varphi_i\equiv1$ for $i\notin\{1,2\}$, then
\[
\int\varphi_1\circ f^{n_1}\cdot\varphi_2\circ f^{n_1+n_2}\cdot1=\int\varphi_2\circ f^{n_2}\cdot\varphi_1\lto{n_2\to\infty}\int\varphi_1\int\varphi_2
\]
by the first step, so
\[\varphi_1\circ f^{n_1}\cdot\varphi_2\circ f^{n_1+n_2}\xrightarrow[n_1,n_2\to\infty]{\text{ weakly
}}\int\varphi_1\int\varphi_2
\]
with like statements for any pair of the $\varphi_i$. This can be
continued, and the existence of an accumulation point (by the
Banach--Alaoglu Theorem) completes the proof.
%
%
%
\end{proof}
\section{The one-sided Hopf Argument yields multiple mixing}\Label{SHopfArgument}
We note that the following uses no compactness or exponential contraction.
\begin{proposition}[{\cite[\S 3.3]{CoudeneHDR}}]\Label{THMHopfCoudeneMixing}
If\/ $(X,\mu)$ is a metric Borel probability space, $f\colon X\to X$
$\mu$-preserving, $\varphi_i\in L^2(\mu)$, then weak accumulation points
of\/ $\prod_{i=1}^N\varphi_i\circ f^{\sum_{j=1}^in_j}$ with
$n_i\lto{n\to\infty}\infty$ are $W^{ss}$-saturated.
\end{proposition}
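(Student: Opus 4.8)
The plan is to run the Hopf argument on the sequence realizing the weak accumulation point rather than on a single invariant function, and then to transfer the resulting information about stable sets from that sequence to its weak limit by a convexity device. No use is made of compactness, of hyperbolicity rates, or of invertibility of $f$.

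\textbf{Step 1 (reduction to bounded Lipschitz $\varphi_i$).} Write $\psi_k:=\prod_{i=1}^N\varphi_i\circ f^{s_i(k)}$ with $s_i(k):=\sum_{j\le i}n_j(k)$ and $n_j(k)\to\infty$, so that $\psi_k\rightharpoonup\psi$ weakly in $L^2(\mu)$. Let $S\subset L^2(\mu)$ be the set of $W^{ss}$-saturated functions; intersecting the witness sets of \Ref{DEFWsSubordinate} and passing to an a.e.-convergent subsequence shows $S$ is a norm-closed linear subspace. Bounded Lipschitz functions are dense in $L^2(\mu)$ (regularity of $\mu$ on the metric space $X$), and the telescoping bound $\|\prod_i a_i-\prod_i b_i\|_2\le\sum_i\big(\prod_{j<i}\|a_j\|_\infty\big)\big(\prod_{j>i}\|b_j\|_\infty\big)\|a_i-b_i\|_2$ shows that replacing each $\varphi_i$ by such an approximant (of the same sup bound) changes $\psi_k$ by $o(1)$ in $L^2$ uniformly in $k$; since $S$ is closed, a diagonal argument reduces the statement to the case in which every $\varphi_i$ is bounded and Lipschitz. (If the products $\prod_i\varphi_i\circ f^{s_i(k)}$ are merely in $L^1$, the same reduction runs with weak $L^1$ convergence.) So assume henceforth $\|\varphi_i\|_\infty\le1$ and $\varphi_i$ $L$-Lipschitz, and fix honest representatives, so the $\psi_k$ are honest bounded functions.

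\textbf{Step 2 (the Hopf step, now pointwise).} I claim $\psi_k(x)-\psi_k(y)\to0$ as $k\to\infty$ for \emph{every} pair $x,y$ with $y\in W^{ss}(x)$. Indeed $s_i(k)\ge n_1(k)\to\infty$, so $y\in W^{ss}(x)$ gives $d\big(f^{s_i(k)}(x),f^{s_i(k)}(y)\big)\to0$, hence $\big|\varphi_i(f^{s_i(k)}(x))-\varphi_i(f^{s_i(k)}(y))\big|\le L\,d\big(f^{s_i(k)}(x),f^{s_i(k)}(y)\big)\to0$ for each $i$; the telescoping estimate with factors bounded by $1$ then gives $|\psi_k(x)-\psi_k(y)|\to0$. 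This is exactly the step ``$\varphi(f^{n_i}(x))-\varphi(f^{n_i}(y))\to0$'' from the proof of \Ref{THMHopfCoudene}, available here directly, with no passage to common return times, because the $\varphi_i$ are now globally uniformly continuous.

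\textbf{Step 3 (from $\psi_k$ to $\psi$).} By Mazur's theorem there are convex combinations $\tilde\psi_\ell=\sum_{k\ge\ell}\lambda_{\ell,k}\psi_k$ with $\tilde\psi_\ell\to\psi$ in $L^2$, hence a subsequence with $\tilde\psi_{\ell_m}\to\psi$ $\mu$-a.e.\ on a set $G$ of full measure; fix the corresponding representative of $\psi$. For $x,y\in G$ with $y\in W^{ss}(x)$,
\[
|\tilde\psi_{\ell_m}(x)-\tilde\psi_{\ell_m}(y)|\le\sum_{k\ge\ell_m}\lambda_{\ell_m,k}\,|\psi_k(x)-\psi_k(y)|\le\sup_{k\ge\ell_m}|\psi_k(x)-\psi_k(y)|\longrightarrow 0\quad(m\to\infty)
\]
by Step 2, and letting $m\to\infty$ on the left gives $\psi(x)=\psi(y)$. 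Thus $\psi$ is $W^{ss}$-saturated, with witness set $G$. I expect the only genuinely delicate point to be this last transfer: a weak limit carries no pointwise information by itself, and the convex-combination/a.e.-subsequence maneuver is precisely what converts the robust pointwise control of $\psi_k$ along stable sets into a statement about $\psi$; everything else is the familiar Hopf bookkeeping applied to a product of compositions.
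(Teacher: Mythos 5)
Your proof is correct and follows essentially the same route as the paper's: a telescoping product estimate to reduce to bounded uniformly continuous (Lipschitz) factors, the pointwise Hopf estimate along stable sets, and a convexity device (Mazur's theorem in your write-up, the Banach--Saks lemma in the paper) to upgrade the weak limit to an a.e.\ limit of convex combinations on which the pointwise control survives. The only differences are cosmetic --- you approximate first and take convex combinations second, the paper does the reverse --- so nothing further is needed.
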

\Ref{PRPMultiMixLemma} gives a strong immediate consequence of
\Ref{THMHopfCoudeneMixing}:
\begin{theorem}\Label{CORWsergodicimpliesmixing}
$f$ is multiply mixing if every $W^{ss}$-saturated $\varphi\in L^2$ is constant a.e.
\end{theorem}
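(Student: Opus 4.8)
The statement to prove is Theorem~\ref{CORWsergodicimpliesmixing}: if every $W^{ss}$-saturated $\varphi\in L^2$ is constant a.e., then $f$ is multiply mixing. The plan is simply to chain together the two results that precede it. By Proposition~\ref{PRPMultiMixLemma}, it suffices to show that for any $\varphi_i\in L^2(\mu)$, every weak accumulation point of $\prod_{i=1}^N\varphi_i\circ f^{\sum_{j=1}^in_j}$ with $n_i\to\infty$ is constant a.e. Now Proposition~\ref{THMHopfCoudeneMixing} tells us precisely that any such weak accumulation point is $W^{ss}$-saturated. Combining this with the hypothesis—that $W^{ss}$-saturated $L^2$ functions are a.e.\ constant—gives that every such accumulation point is a.e.\ constant, which is exactly the criterion in Proposition~\ref{PRPMultiMixLemma}.

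So the proof I would write is essentially one line: "By Proposition~\ref{THMHopfCoudeneMixing}, any weak accumulation point of $\prod_{i=1}^N\varphi_i\circ f^{\sum_{j=1}^in_j}$ with $n_i\to\infty$ is $W^{ss}$-saturated, hence constant a.e.\ by hypothesis; Proposition~\ref{PRPMultiMixLemma} then yields $N$-mixing for every $N$, i.e.\ multiple mixing." One small bookkeeping point worth checking: Proposition~\ref{THMHopfCoudeneMixing} is stated for $\varphi_i\in L^2(\mu)$ and the definition of $N$-mixing quantifies over $\varphi_i\in L^\infty$, while Proposition~\ref{PRPMultiMixLemma} bridges these by allowing $\varphi_i\in L^2$ in its accumulation-point criterion—so the three results are already phrased to compose cleanly, and no approximation argument is needed.

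There is no real obstacle here: the content has been front-loaded into Proposition~\ref{THMHopfCoudeneMixing} (the one-sided Hopf argument) and Proposition~\ref{PRPMultiMixLemma} (the reduction of multiple mixing to constancy of accumulation points). The only thing to be careful about is not to overstate: the hypothesis "$W^{ss}$-saturated $\varphi\in L^2$ is constant" is genuinely the ergodicity of the stable partition, and it is strictly stronger than what the two-sided Hopf argument needs, so I would make sure the statement and proof do not accidentally suggest that joint ergodicity of $W^{ss}$ and $W^{su}$ suffices—it is ergodicity of $W^{ss}$ \emph{alone} that is being used, exactly as flagged in the introduction. With that caveat noted, the proof is immediate from the cited results.
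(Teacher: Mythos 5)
Your proposal is correct and is exactly the paper's argument: the theorem is presented there as an immediate consequence of combining \Ref{THMHopfCoudeneMixing} (weak accumulation points are $W^{ss}$-saturated) with \Ref{PRPMultiMixLemma} (constancy of all such accumulation points characterizes $N$-mixing). Your bookkeeping remark about the $L^2$ versus $L^\infty$ quantification is a sensible check and consistent with how the paper phrases the two propositions.
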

\begin{proof}[Proof of \Ref{THMHopfCoudeneMixing}]
By the Banach--Saks Lemma $\psi_n\xrightarrow[n\to\infty]{L^2\text{-weakly
}}\psi$ has a subsequence for which
$\displaystyle\frac1n\sum_{k=0}^{n-1}\psi_{n_k}\xrightarrow[n\to\infty]{\ L^2\ }\psi$.
Furthermore,
$\psi_n\xrightarrow[n\to\infty]{\ L^2\ }\psi$ implies that there is a
subsequence with $\psi_{n_k}\xrightarrow[k\to\infty]{\text{ a.e.\ }}\psi$. 
This gives subsequences $m_l$, $n_{i_k}$
with
\[
\Psi_l\dfn\frac1{m_l}\sum_{k=0}^{m_l-1}\psi_{n_{i_k}}\xrightarrow[l\to\infty]{\text{ a.e.\ }}\psi.
\]
Pointwise convergence makes this $W^{ss}$-saturated for bounded uniformly
continuous functions:  $p^l_{ij}\dfn\varphi_i(f^{(n_i)_l}(x_j))$ for
$j=1,2$ with $x_2\in W^{ss}(x_1)$ gives
\[
\prod_{i=1}^Np^l_{i2}-\prod_{i=1}^Np^l_{i1}
=
\sum_{\ell=1}^N
\big[\prod_{i=1}^{\ell-1}p^l_{i2}\big]
\big[p^l_{\ell2}-p^l_{\ell1}\big]
\big[\prod_{i=\ell+1}^Np^l_{i1}\big]
\lto{l\to\infty}0.
\]
Approximate $\varphi_i^0\in L^\infty\cap L^2$ within $1/k$ by bounded
uniformly continuous $\varphi_i^k$ and let $p^l_{ij}\dfn\varphi_i^j\circ
f^{(n_i)_l}$. Then weak limits (of subsequences if necessary) satisfy
\[
\!\!\|\psi-\psi^k\|\le\varliminf_{l\to\infty}\big\lVert\prod_{i=1}^Np^l_{ik}-\prod_{i=1}^Np^l_{i0}\big\rVert
\le
\sum_{\ell=1}^N
\prod_{i=1}^{\!\!\ell-1\!\!}\big\lVert p^l_{i2}\big\rVert_\infty
\big\lVert p^l_{\ell2}-p^l_{\ell1}\big\rVert_2
\prod_{\!\!\!\!i=\ell+1\!\!\!\!}^N\big\lVert p^l_{i1}\big\rVert_\infty
\lto{k\to\infty}0
\]
so, after passing to a subsequence, $\psi^k\aeto\psi$, which is hence $W^{ss}$-saturated.
\end{proof}
In \Ref{EX2111} the contracting lines have irrational slope, so the
intersections of each with the circle $S^1\times\{0\}\subset S^1\times
S^1=\T^2$ are the orbit of an irrational rotation---whose ergodicity
implies that the stable partition $W^{ss}$ is ergodic \cite[Proposition
  4.2.2]{KatokHasselblatt}. The ``one-sided''
\Ref{CORWsergodicimpliesmixing} gives
\begin{proposition}
$F_{\left(\begin{smallmatrix}2&1\\1&1\end{smallmatrix}\right)}$ is multiply
mixing with respect
to Lebesgue measure.
\end{proposition}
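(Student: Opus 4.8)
The plan is to invoke \Ref{CORWsergodicimpliesmixing}: it suffices to show that every $W^{ss}$-saturated function $\varphi\in L^2(\T^2)$ is constant almost everywhere, i.e.\ that the stable partition of $F_{\left(\begin{smallmatrix}2&1\\1&1\end{smallmatrix}\right)}$ is ergodic for Lebesgue measure.

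First I would describe the stable leaves explicitly. Since the map is linear, $W^{ss}(x)$ is the projection to $\T^2$ of the affine line through a lift of $x$ with the contracting slope $\beta\dfn\frac{-1-\sqrt5}{2}$, so the stable leaves are precisely the orbits of the irrational linear flow $\phi_t\colon\T^2\to\T^2$, $\phi_t(x)=x+t(1,\beta)\bmod\Z^2$. Second, I would turn saturation into flow-invariance: given a $W^{ss}$-saturated $\varphi$ with associated full-measure set $G$ as in \Ref{DEFWsSubordinate}, fix $t\in\R$; for every $x$ in the full-measure set $G\cap\phi_{-t}(G)$ one has $x,\phi_t(x)\in G$ with $\phi_t(x)\in W^{ss}(x)$, so $\varphi(\phi_t(x))=\varphi(x)$, whence $\varphi\circ\phi_t\aeq\varphi$ for every $t\in\R$. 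Third, ergodicity of $\phi_t$ then forces $\varphi\aeq\const$, and \Ref{CORWsergodicimpliesmixing} gives multiple mixing.

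The one substantive ingredient is the ergodicity of the irrational linear flow $\phi_t$ --- equivalently, as in the discussion preceding the statement, the ergodicity of the irrational rotation of $S^1\times\{0\}$ arising as the holonomy return map of the stable foliation to that transversal, together with the propagation of ergodicity from this return map to the whole foliation. I would dispatch this by a Fourier computation, which conveniently operates at the $L^2$ level so that no quantifier-order subtlety (``for a.e.\ $x$, for all $t$'') intrudes: writing $\varphi=\sum_{k\in\Z^2}c_k e^{2\pi i k\cdot x}$, the identity $\varphi\circ\phi_t\aeq\varphi$ forces $c_k(e^{2\pi i t(k_1+\beta k_2)}-1)=0$ for all $t$, and $k_1+\beta k_2\neq0$ for every $k\neq0$ because $\beta$ is irrational, so $c_k=0$ for $k\neq0$ and $\varphi\aeq c_0$. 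This is the ``main obstacle'' only in the bookkeeping sense; once the stable foliation is recognized as an irrational linear foliation, everything is elementary.
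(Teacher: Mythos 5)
Your proof is correct and follows essentially the same route as the paper: both arguments reduce the proposition to ergodicity of the stable partition---the orbit foliation of the irrational linear flow in the contracting direction---and then invoke \Ref{CORWsergodicimpliesmixing}. The only difference is cosmetic: the paper cites ergodicity of the induced irrational rotation on the transversal circle $S^1\times\{0\}$, whereas you verify ergodicity of the linear flow directly by a Fourier computation, and your step converting $W^{ss}$-saturation into a.e.\ invariance under each $\phi_t$ via the full-measure set $G\cap\phi_{-t}(G)$ is handled correctly.
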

\begin{remark}
Simple Fourier analysis also establishes this conclusion, but
while linearity is helpful for the Hopf argument, it is indispensable for
Fourier analysis. 
\end{remark}
\begin{remark}
Instead of ergodicity of an irrational rotation, one can use
\Ref{THMAbstractMMix}, and it may be of
interest to read the proofs with
$\left(\begin{smallmatrix}2&1\\1&1\end{smallmatrix}\right)$ in mind.\looseness-1
\end{remark}
\begin{remark}
  A volume-preserving $C^1$ perturbation of\/ $F_{\left(\begin{smallmatrix}2&1\\1&1\end{smallmatrix}\right)}$ is a
topologically conjugate Anosov diffeomorphism for which the local product
charts can be chosen to be differentiable. (More generally, any
volume-preserving Anosov diffeomorphism of $\T^2$ is topologically
conjugate to a hyperbolic automorphism, and  the local product
charts can be chosen to be differentiable.) Thus, we have a local product
structure and absolute continuity for free and obtain multiple mixing
from \Ref{THMAbstractMMix} and
\Ref{CORWsergodicimpliesmixing}.
\end{remark}
\begin{remark}\Label{REMsingular}
In contrast with Lebesgue measure, the measure that assigns 1/4 to each of
the points $\pm\nicefrac15\binom12$ and $\pm\nicefrac15\binom24$ has 2
ergodic components. The reader is encouraged to check where this affects
our proofs.
\end{remark}
The contracting lines of the partially hyperbolic automorphism in
\Ref{Waltersautomorphism} are generated by a vector whose components are
rationally independent, hence project to the orbits of an ergodic flow
\cite[p.\ 147]{KatokHasselblatt}. \Ref{CORWsergodicimpliesmixing} gives:
\begin{proposition}
$F_W$ in \Ref{Waltersautomorphism} is multiply mixing.
\end{proposition}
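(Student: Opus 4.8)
The plan is to apply \Ref{CORWsergodicimpliesmixing}, for which it suffices to show that every $W^{ss}$-saturated $\varphi\in L^2(\T^4,\vol)$ is a.e.\ constant. The key observation is that each $W^{ss}$-leaf of $F_W$ contains the corresponding orbit of the linear flow $\phi_t\colon x\mapsto x+tv^-\pmod{\Z^4}$ in the direction of the contracting eigenvector $v^-$ of \Ref{Waltersautomorphism}: since $\phi_t(x)-x=tv^-$ lies in the eigenline of $\lambda_-\in(0,1)$, one has $d(F_W^n(\phi_t x),F_W^n(x))\le\|\lambda_-^n tv^-\|\to 0$, so $\phi_t(x)\in W^{ss}(x)$ for all $x$ and $t$.

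Now let $\varphi\in L^2$ be $W^{ss}$-saturated, with associated full-measure set $G$ as in \Ref{DEFWsSubordinate}. Fixing $t$, the set $G\cap\phi_t^{-1}(G)$ has full measure since $\phi_t$ preserves $\vol$, and for $x$ in it both $x$ and $\phi_t(x)$ lie in $G$ while $\phi_t(x)\in W^{ss}(x)$, so $\varphi(\phi_t x)=\varphi(x)$; hence $\varphi\circ\phi_t\aeq\varphi$ for every $t$, i.e.\ $\varphi$ is invariant under the flow $(\phi_t)$. Since the coordinates of $v^-$ generate a $4$-dimensional $\Q$-vector space by \Ref{Waltersautomorphism}, they are linearly independent over $\Q$, so $\langle k,v^-\rangle\ne 0$ for every $k\in\Z^4\setminus\{0\}$ and the flow $(\phi_t)$ is ergodic \cite[p.\ 147]{KatokHasselblatt}; therefore $\varphi$ is a.e.\ constant. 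This is exactly the hypothesis of \Ref{CORWsergodicimpliesmixing}, which then yields multiple mixing of $F_W$.

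All the dynamics is already packaged in \Ref{THMHopfCoudeneMixing} and \Ref{CORWsergodicimpliesmixing}, so the only genuinely external input is the ergodicity of the irrational linear flow $(\phi_t)$, which rests solely on the $\Q$-independence of the coordinates of $v^-$ recorded in \Ref{Waltersautomorphism}; this number-theoretic fact, rather than any hyperbolicity estimate, is the crux. I note that the argument needs neither the reverse inclusion $W^{ss}(x)\subseteq x+\R v^-$ nor any absolute-continuity statement: the passage from $W^{ss}$-saturation to flow-invariance uses only that $\phi_t$ moves points within $W^{ss}$-leaves and preserves $\vol$.
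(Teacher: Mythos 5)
Your proposal is correct and follows essentially the same route as the paper: the paper likewise observes that the contracting direction $v^-$ has rationally independent components, so the stable lines project to orbits of an ergodic linear flow, whence every $W^{ss}$-saturated function is a.e.\ constant and \Ref{CORWsergodicimpliesmixing} applies. You have merely filled in the details (the inclusion of the flow orbit in $W^{ss}(x)$ and the passage from saturation to a.e.\ flow-invariance) that the paper leaves implicit.
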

\section{The two-sided Hopf argument yields mixing}\Label{SApplications}
The assumption in \Ref{CORWsergodicimpliesmixing} that $W^{ss}$ is ergodic
is rather strong, and the classical Hopf argument is based on joint
ergodicity of $W^{ss}$ and $W^{su}$. To improve this to mixing, we need to
augment the conclusion of \Ref{THMHopfCoudeneMixing} to include
$W^{su}$-saturation as well. This requires a slightly subtle argument.
\begin{theorem}[{\cite[Theorem 3]{Coudene}}]\Label{THMHopfCoudeneMixing2sided}
If\/ $(X,\mu)$ is a metric Borel probability space, $f\colon X\to X$
invertible $\mu$-preserving and $\varphi\in L^2(\mu)$, then any weak
accumulation point of\/ $U_f^n(\varphi)$ as $n\to+\infty$ is $W^{ss}$- and
$W^{su}$-saturated.
\end{theorem}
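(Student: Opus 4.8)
The plan is to get the $W^{ss}$-saturation of $\psi$ for free from \Ref{THMHopfCoudeneMixing} (case $N=1$, with $\varphi_1=\varphi$), and to manufacture the $W^{su}$-saturation by pitting the \emph{same} proposition, applied to $f^{-1}$, against the Hilbert-space geometry of $L^2$. Write $U\dfn U_f$ for the Koopman operator, so we are given $n_k\to+\infty$ with $U^{n_k}\varphi\rightharpoonup\psi$ weakly in $L^2(\mu)$; that $\psi$ is $W^{ss}$-saturated is then immediate, and the whole issue is the $W^{su}$-saturation.

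First I would record two ingredients. (i) The set $H\dfn\{\eta\in L^2(\mu):\eta\text{ is }W^{su}\text{-saturated}\}$ is a \emph{closed, $U$-invariant} linear subspace. Linearity is clear (intersect the good sets of \Ref{DEFWsSubordinate}); closedness follows because if $\eta_j\to\eta$ in $L^2$, with $\eta_j$ carried by a good set $G_j$, one extracts a subsequence converging a.e.\ on a full-measure set $G_\infty$ and checks that $G_\infty\cap\bigcap_jG_j$ is a good set for $\eta$; and $UH=H$ is the identity $f^{\pm1}(W^{su}(x))=W^{su}(f^{\pm1}x)$ together with the $\mu$-invariance of $f$. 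Hence $H^\perp$ is also closed and $U$-invariant. (ii) Applying \Ref{THMHopfCoudeneMixing} with $N=1$ to $g\dfn f^{-1}$ — legitimate, since $g$ is again an invertible $\mu$-preserving Borel map of the metric space $X$, and $W^{ss}$ for $g$ is exactly $W^{su}$ for $f$ — yields: \emph{every weak accumulation point of $\phi\circ f^{-m}$ as $m\to+\infty$ lies in $H$.}

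The heart of the proof is then the following. Fix $\phi\in H^\perp$; I claim $\langle\psi,\phi\rangle=0$. Since $H^\perp$ is $U$-invariant, $\phi\circ f^{-n_k}=U^{-n_k}\phi\in H^\perp$ for every $k$; this sequence is bounded, so along some subsequence $\phi\circ f^{-n_{k_l}}\rightharpoonup\xi$ in $L^2$. As a weak limit of elements of the (weakly closed) subspace $H^\perp$ we have $\xi\in H^\perp$; as a weak accumulation point of $\phi\circ f^{-m}$ with $m=n_{k_l}\to+\infty$ we have $\xi\in H$ by ingredient (ii); hence $\xi\in H\cap H^\perp=\{0\}$, i.e.\ $\phi\circ f^{-n_{k_l}}\rightharpoonup0$. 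Because $U$ is unitary, $\langle U^{n_{k_l}}\varphi,\phi\rangle=\langle\varphi,U^{-n_{k_l}}\phi\rangle=\langle\varphi,\phi\circ f^{-n_{k_l}}\rangle\to\langle\varphi,0\rangle=0$, whereas $U^{n_{k_l}}\varphi\rightharpoonup\psi$ forces $\langle U^{n_{k_l}}\varphi,\phi\rangle\to\langle\psi,\phi\rangle$. So $\langle\psi,\phi\rangle=0$; as $\phi\in H^\perp$ was arbitrary, $\psi\in(H^\perp)^\perp=H$, that is, $\psi$ is $W^{su}$-saturated.

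I expect the crux — the step the authors flag as ``slightly subtle'' — to be precisely the middle of the last paragraph: one must keep the pushed sequence $\phi\circ f^{-n_k}$ inside $H^\perp$ (via $U$-invariance of the leafwise-constant subspace) while simultaneously recognizing, through the \emph{backward} one-sided Hopf argument of ingredient (ii), that its weak limits can only be $W^{su}$-saturated; the clash $H\cap H^\perp=\{0\}$ is what conjures the extra saturation of $\psi$ that unitarity alone would never give. Everything else — linearity, closedness and $U$-invariance of $H$, the identity $\langle U^n\varphi,\phi\rangle=\langle\varphi,U^{-n}\phi\rangle$, and the extraction of weakly convergent subsequences from bounded sets in $L^2$ — is routine and I would dispatch it quickly.
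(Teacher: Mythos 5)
Your proof is correct, and at bottom it is the same Babillot-style mechanism as the paper's: both hinge on applying \Ref{THMHopfCoudeneMixing} to $f^{-1}$ (so that backward weak accumulation points are $W^{su}$-saturated), on the unitarity identity $\langle U^n\varphi,\phi\rangle=\langle\varphi,U^{-n}\phi\rangle$, and on forcing some vector into the intersection of a closed invariant subspace with its orthocomplement. The organization differs, though, and your version is arguably the cleaner of the two. The paper works with $I$, the subspace of functions saturated for \emph{both} partitions; it decomposes $\varphi=\varphi_I+\varphi^\perp$, must track both pieces forward (using that $I$ and $I^\perp$ are $U$-invariant and weakly closed to get $\psi_I\in I$ and $\psi^\perp\in I^\perp$), and to place the backward weak limit $\psi'$ of $U^{-n_i}\psi^\perp$ inside $I$ it needs \emph{two} invocations of the one-sided argument on $\psi^\perp$ (forward for $W^{ss}$, backward for $W^{su}$) before the pairing $0=\langle\varphi^\perp,\psi'\rangle=\|\psi^\perp\|^2$ closes the loop. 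You instead decouple the two saturations entirely: the $W^{ss}$ part of the conclusion comes straight from the forward one-sided argument, while for the $W^{su}$ part you take $H$ to be the $W^{su}$-saturated functions alone, test $\psi$ against an arbitrary $\phi\in H^\perp$, and observe that any weak limit $\xi$ of $U^{-n_{k_l}}\phi$ lies in $H$ (backward Hopf) and in $H^\perp$ (weak closedness of a closed invariant subspace), hence $\xi=0$ and $\langle\psi,\phi\rangle=0$; no decomposition of $\varphi$ is ever needed. The supporting facts you use (linearity, $L^2$-closedness and $U$-invariance of $H$, Banach--Alaoglu) are all verified correctly, and you share with the paper the same tacit hypothesis that $f^{-1}$ is Borel so that \Ref{THMHopfCoudeneMixing} applies to it.
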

\begin{proof}[Proof \normalfont(\cite{Babillot,Coudene})]
Denote by $I\subset L^2(\mu)$ the (closed) subspace of functions
subordinate to $W^{ss}$ and $W^{su}$ and by $I^\perp\dfn\{\varphi\in
L^2\st\langle\varphi,\psi\rangle=0\text{ for }\psi\in I\}$ its
orthocomplement. To show $U_f^{n_i}(\varphi)\xrightarrow[i\to\infty]{\text{
weakly }}\psi\Rightarrow\psi\in I$ take
$\varphi=\varphi_I+\varphi^\perp\in I\oplus I^\perp=L^2$ and a subsequence
with $U_f^{n_{i_k}}(\varphi_I)\xrightarrow[k\to\infty]{\text{ weakly
}}\psi_I\in I$ and
$U_f^{n_{i_k}}(\varphi^\perp)\xrightarrow[k\to\infty]{\text{ weakly
}}\psi^\perp\perp I$. Then $\psi=\psi_I+\psi^\perp$, and we are done if we
find a $\psi'$ with
$\langle\psi^\perp,\psi^\perp\rangle=\langle\varphi^\perp,\psi'\rangle=0$.

By \Ref{THMHopfCoudeneMixing}, $\psi^\perp$ is subordinate to $W^{ss}$, and
hence so is any $U_f^{-n}(\psi^\perp)$ and any weak limit
$\psi'=\lim_{i\to\infty}U_f^{-{n_i}}(\psi^\perp)$, while
\Ref{THMHopfCoudeneMixing} applied to $\psi^\perp$ and $f^{-1}$ implies
that $\psi'$ is subordinate to $W^{su}$ as well, \ie $\psi'\in I$. Thus
\[
0=\langle\varphi^\perp,\psi'\rangle
=
\lim_{i\to\infty}\langle\varphi^\perp,U_f^{-{n_i}}(\psi^\perp)\rangle
=
\lim_{i\to\infty}\langle U_f^{n_i}(\varphi^\perp),\psi^\perp\rangle
=
\langle\psi^\perp,\psi^\perp\rangle.\qedhere
\]
\end{proof}
\Ref{THMHopfCoudeneMixing2sided} can alternatively be obtained from the
following result:
\begin{theorem}[{Derriennic--Downarowicz \cite[Th\'eor\`eme 2.4]{CoudeneEns}}]
A weak accumula\-tion point of $(U_f^n(\varphi))_{n\in\N}$ is a weak
accumulation point of $(U_f^{-n}(\phi))_{n\in\N}$ for some $\phi$\,\llap.\!\looseness-1
\end{theorem}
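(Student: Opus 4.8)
The plan is to reduce the statement to the multiplication model of a single unitary operator, where it becomes almost a formula. Write $U\dfn U_f$, which is unitary on $H\dfn L^2(\mu)$ since $f$ is invertible and $\mu$-preserving, and suppose $U^{n_i}\varphi\to\psi$ weakly with $n_i\to+\infty$ (if $\varphi=0$ there is nothing to prove, so assume $\varphi\neq0$); we must exhibit $\phi\in L^2(\mu)$ of which $\psi$ is a weak accumulation point under the iterates $U^{-n}$. First I would pass to the cyclic subspace $H_\varphi$, the closure of $\Span\{U^n\varphi\st n\in\Z\}$: it is invariant under $U$ and $U^{-1}$ and norm-closed, hence weakly closed, so every $U^{n_i}\varphi$ and the limit $\psi$ lie in $H_\varphi$, and for vectors of $H_\varphi$ weak convergence in $H$ coincides with weak convergence in $H_\varphi$. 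By the spectral theorem for a unitary with a cyclic vector there are a finite Borel measure $\nu$ on the circle $\T$ and a unitary $J\colon H_\varphi\to L^2(\T,\nu)$ with $J\varphi\equiv1$ and $JUJ^{-1}$ equal to $M_z$, multiplication by the coordinate function $z$. Then $JU^n\varphi=z^n$ for all $n$, and, writing $h\dfn J\psi\in L^2(\T,\nu)$, the hypothesis becomes simply $z^{n_i}\to h$ weakly in $L^2(\nu)$.

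The crux is that running time backward in this model amounts to complex conjugation up to a unimodular twist. Since $|z|=1$ on $\T$ one has $\overline{z^{n_i}}=z^{-n_i}$, and conjugation $g\mapsto\bar g$ is weakly sequentially continuous on $L^2(\nu)$ because $\langle\bar g,k\rangle=\overline{\langle g,\bar k\rangle}$; hence $z^{-n_i}\to\bar h$ weakly in $L^2(\nu)$. This is already backward in time (with $n_i\to+\infty$), but the limit is $\bar h$ rather than $h$, so the naive analogue of the trivial passage ``$f$-invariant $\Rightarrow f^{-1}$-invariant'' used for ergodicity---taking $\phi=\varphi$---would only realize $\bar\psi$. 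To repair the phase I would set $g\dfn h^2/|h|^2$ on $\{h\neq0\}$ and $g\dfn0$ elsewhere, so that $g\in L^\infty(\T,\nu)\subset L^2(\T,\nu)$ with $|g|\leq1$ and $\bar h\,g=h$ $\nu$-almost everywhere, and put $\phi\dfn J^{-1}g\in H_\varphi\subset L^2(\mu)$. Multiplication $M_g$ by the fixed bounded function $g$ is a bounded, hence weakly continuous, operator on $L^2(\nu)$, so from $z^{-n_i}\to\bar h$ one gets $M_g z^{-n_i}=z^{-n_i}g\to g\bar h=h$ weakly in $L^2(\nu)$. Since $U^{-n_i}\phi=J^{-1}(JU^{-n_i}J^{-1})g=J^{-1}(z^{-n_i}g)$, pulling this back through $J^{-1}$ gives $U^{-n_i}\phi\to J^{-1}h=\psi$ weakly in $H_\varphi$, hence weakly in $H$; as $n_i\to+\infty$, $\psi$ is a weak accumulation point of $(U^{-n}\phi)_{n\in\N}$, as required. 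Combined with \Ref{THMHopfCoudeneMixing} applied to $f$ and to $f^{-1}$, this yields the alternative proof of \Ref{THMHopfCoudeneMixing2sided} alluded to above.

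The step I expect to be the genuine obstacle is conceptual rather than computational: recognizing that one must not take $\phi=\varphi$, and pinpointing the obstruction as the complex conjugate that time reversal introduces in the spectral picture. Once that is seen, the remedy---dividing out the phase of $h$---is essentially forced, and the one point to be careful about is that it is legitimate precisely because the corrected multiplier $g$ is bounded, so that $M_g$ commutes with weak limits, whereas the $n_i$-dependent multipliers $z^{-n_i}$ do not. Note that no property of $h=J\psi$ is used beyond $h\in L^2(\nu)$, since $g$ is built directly from $h$; in particular one needs no a priori information about $\psi$ itself.
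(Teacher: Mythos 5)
Your argument is correct and complete. Note that the paper does not prove this statement at all---it is quoted from \cite[Th\'eor\`eme 2.4]{CoudeneEns} with attribution to Derriennic--Downarowicz---so there is no in-paper proof to compare against; what you have written is essentially the standard spectral-theoretic proof of that result. The reduction to the cyclic subspace $H_\varphi$ is sound (it is norm-closed, hence weakly closed, so $\psi\in H_\varphi$, and it is separable, which also legitimizes extracting a subsequence $n_i\to+\infty$ from the weak accumulation point, since the ball of a separable Hilbert space is weakly metrizable). The two pivotal points are exactly the ones you flag: time reversal conjugates the limit in the multiplication model, so $\phi=\varphi$ only realizes $\bar\psi$; and the phase correction $g=h^2/|h|^2$ (with $g=0$ where $h=0$) is a \emph{fixed} bounded multiplier, so $M_g$ is weak--weak continuous and carries $z^{-n_i}\to\bar h$ to $z^{-n_i}g\to g\bar h=h$. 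One cosmetic remark: the argument lives in complex $L^2$, as the spectral theorem requires; since the theorem only asserts existence of \emph{some} $\phi$, and the subsequent saturation arguments apply to real and imaginary parts separately, this costs nothing.
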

\Ref{THMHopfCoudeneMixing2sided} has the following consequences, as noted
in \cite{Coudene}:
\begin{theorem}\Label{CORHopfCoudeneMixing}
If\/ $(X,\mu)$ is a metric Borel probability space, $f\colon X\to X$
invertible $\mu$-preserving, $W^{ss}, W^{su}$ \emph{jointly ergodic}, then
$f$ is mixing.
\end{theorem}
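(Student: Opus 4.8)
The cleanest route is to combine \Ref{THMHopfCoudeneMixing2sided} with the $N=1$ case of \Ref{PRPMultiMixLemma}, exactly as \Ref{CORWsergodicimpliesmixing} combines \Ref{THMHopfCoudeneMixing} with \Ref{PRPMultiMixLemma}. By \Ref{PRPMultiMixLemma} with $N=1$ it suffices to show that for every $\varphi\in L^2(\mu)$, any weak accumulation point of $\varphi\circ f^{n}=U_f^n(\varphi)$ with $n\to\infty$ is constant a.e.

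So I would fix $\varphi\in L^2(\mu)$ and let $\psi$ be such an accumulation point, say $U_f^{n_i}(\varphi)\to\psi$ weakly with $n_i\to\infty$; such $\psi$ exist because $\|U_f^n(\varphi)\|_2=\|\varphi\|_2$ is bounded (as $f$ preserves $\mu$) and the closed linear span of the sequence is a separable Hilbert space, so bounded sequences have weakly convergent subsequences. By \Ref{THMHopfCoudeneMixing2sided} the limit $\psi$ is simultaneously $W^{ss}$-saturated and $W^{su}$-saturated, and joint ergodicity of $W^{ss}$ and $W^{su}$ then yields $\psi\aeq\const$. That is precisely the hypothesis \Ref{PRPMultiMixLemma} requires, and it delivers mixing (its recursive identification of the constant specializes here to the one-line observation that testing the weak convergence against the function $1$ gives $\const=\lim_i\int U_f^{n_i}(\varphi)\,d\mu=\int\varphi\,d\mu$, so every weak accumulation point equals $\int\varphi\,d\mu$).

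There is essentially no obstacle to surmount: the analytic heart is \Ref{THMHopfCoudeneMixing2sided} (the Babillot step, already established above), and everything else is soft — weak sequential compactness of balls in a separable Hilbert space to produce the accumulation points, plus the bookkeeping of \Ref{PRPMultiMixLemma}. If one preferred not to cite \Ref{PRPMultiMixLemma}, the only point needing a moment's care would be the passage from ``every weak accumulation point of $U_f^n(\varphi)$ equals $\int\varphi\,d\mu$'' to ``$U_f^n(\varphi)\to\int\varphi\,d\mu$ weakly'', which is again just weak compactness.
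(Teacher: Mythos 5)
Your proposal is correct and is essentially the paper's own route: the paper derives \Ref{CORHopfCoudeneMixing} directly as a consequence of \Ref{THMHopfCoudeneMixing2sided} (weak accumulation points of $U_f^n(\varphi)$ are $W^{ss}$- and $W^{su}$-saturated, hence constant by joint ergodicity, and equal to $\int\varphi\,d\mu$ by testing against $1$), with weak compactness supplying the accumulation points just as in \Ref{PRPMultiMixLemma}. No gaps.
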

\begin{theorem}\Label{CORHopfCoudeneErgodic}
If\/ $(X,\mu)$ is a metric Borel probability space, $f\colon X\to X$
invertible $\mu$-preserving, and
\[
\varphi\in L^2(\mu)\ f\text{-invariant}
,\ W^{ss}\text{-saturated and }W^{su}\text{-saturated}\Rightarrow\varphi\aeq\const,
\]
then $f$ is ergodic.
\end{theorem}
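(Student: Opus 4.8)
The plan is to read this off directly from \Ref{PRPHopfErgodic}, the two-sided Hopf argument. First I would reduce ergodicity to a statement about $L^2$ functions: since $\mu$ is a probability measure, the indicator function $\chi_A$ of any $f$-invariant measurable set $A$ lies in $L^2(\mu)$ and is $f$-invariant, so it suffices to show that every $f$-invariant $\varphi\in L^2(\mu)$ is constant a.e.\ (then $\chi_A\aeq\const\in\{0,1\}$, i.e.\ $\mu(A)\in\{0,1\}$).

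Next I would take an arbitrary $f$-invariant $\varphi\in L^2(\mu)$ and invoke \Ref{PRPHopfErgodic}: because $f$ is invertible and $\mu$-preserving, $\varphi$ is both $W^{ss}$-saturated and $W^{su}$-saturated. Now the hypothesis of the theorem applies verbatim to this $\varphi$ and yields $\varphi\aeq\const$. Combined with the reduction of the previous paragraph, this shows every $f$-invariant measurable set is null or co-null, i.e.\ $f$ is ergodic.

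There is essentially no obstacle here: the content is entirely contained in \Ref{PRPHopfErgodic}, and this statement is just the packaging of that fact against the definition of ergodicity. The only point that merits a word is the (routine) equivalence between the set-theoretic definition of ergodicity and the assertion that $f$-invariant $L^2$ functions are a.e.\ constant, which is immediate on a probability space via indicator functions. In this sense the theorem stands to \Ref{PRPHopfErgodic} exactly as the classical \Ref{CORHopfErgodic} stands to the joint-ergodicity hypothesis, the difference being only that here one keeps the $f$-invariance of $\varphi$ as an extra assumption rather than demanding joint ergodicity of the two partitions.
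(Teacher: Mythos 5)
Your proof is correct and is essentially the paper's argument: both reduce to the fact that an $f$-invariant $\varphi\in L^2$ is automatically $W^{ss}$- and $W^{su}$-saturated (you cite \Ref{PRPHopfErgodic}; the paper observes that $\varphi$ is a weak accumulation point of $\varphi\circ f^n$ and invokes the saturation results, which amounts to the same thing), after which the hypothesis gives $\varphi\aeq\const$ and the indicator-function reduction finishes the job.
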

\begin{proof}
An $f$-invariant $\varphi$ is a weak accumulation point of\/ $\varphi=\prod_{i=1}^N\varphi_i\circ f^{n_i}$, hence $W^{ss}$- and
$W^{su}$-saturated by \Ref{THMHopfCoudeneMixing}, hence constant by
assumption.
\end{proof}
\Ref{THMHopfCoudeneMixing2sided} also holds for flows (mutatis mutandis),
and thus we get the following corollary:
\begin{corollary}\Label{CORHopfCoudeneflows}
Let $X$ be a metric space, $f^t\colon X\to X$ a flow, $\mu$ an
$f^t$-invariant Borel probability measure. If
\[
\varphi\in L^2(\mu)\ f^t\text{-invariant}
,\ W^{ss}\text{-saturated and }W^{su}\text{-saturated}\Rightarrow\varphi\aeq\const,
\]
then $f^t$ is ergodic, and joint ergodicity of $W^{ss},W^{su}$ implies that
$f^t$ is mixing.
\end{corollary}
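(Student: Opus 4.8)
The plan is to transcribe the proofs of \Ref{CORHopfCoudeneErgodic} and \Ref{CORHopfCoudeneMixing} to the flow setting, the one new ingredient being the flow analog of \Ref{THMHopfCoudeneMixing2sided} announced just above: writing $U_{f^t}(\varphi)=\varphi\circ f^t$, and with $W^{ss}(x)\dfn\{y\in X\st d(f^t(x),f^t(y))\lto{t\to+\infty}0\}$, $W^{su}(x)\dfn\{y\in X\st d(f^{-t}(x),f^{-t}(y))\lto{t\to+\infty}0\}$, any weak accumulation point of $U_{f^t}(\varphi)$ as $t\to+\infty$ is $W^{ss}$- \emph{and} $W^{su}$-saturated. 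I treat ergodicity first, then mixing.

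\emph{Ergodicity.} Let $\varphi\in L^2(\mu)$ be invariant under the flow, so $U_{f^t}(\varphi)=\varphi$ for every $t$; then $\varphi$ is trivially a weak accumulation point of $U_{f^t}(\varphi)$ as $t\to+\infty$, so the flow version of \Ref{THMHopfCoudeneMixing2sided} makes it $W^{ss}$- and $W^{su}$-saturated. Being also $f^t$-invariant, $\varphi$ is then constant a.e.\ by hypothesis. As this holds for every flow-invariant $\varphi\in L^2$, the flow $f^t$ is ergodic.

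\emph{Mixing from joint ergodicity.} Fix $\varphi\in L^2(\mu)$; I claim $U_{f^t}(\varphi)\to\int\varphi\,d\mu$ weakly as $t\to+\infty$, which is mixing of the flow. If not, there are $\eta\in L^2$, $\ee>0$ and $t_n\to+\infty$ with $|\langle U_{f^{t_n}}(\varphi),\eta\rangle-\int\varphi\int\eta|\ge\ee$ for all $n$. Since $(U_{f^{t_n}}(\varphi))_n$ is bounded in $L^2$, Banach--Alaoglu gives a weakly convergent subsequence with some limit $\psi$, which is then a weak accumulation point of $U_{f^t}(\varphi)$ as $t\to+\infty$, hence $W^{ss}$- and $W^{su}$-saturated by the flow version of \Ref{THMHopfCoudeneMixing2sided}, hence constant a.e.\ by joint ergodicity. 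Pairing with $1$ (and using $\langle U_{f^t}(\varphi),1\rangle=\int\varphi$ for all $t$) identifies this constant as $\int\varphi$, so $\langle U_{f^{t_{n_k}}}(\varphi),\eta\rangle\to\int\varphi\int\eta$, contradicting the lower bound. (Equivalently, feed the flow versions of \Ref{THMHopfCoudeneMixing2sided} and \Ref{PRPMultiMixLemma} with $N=1$.)

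\emph{Main obstacle.} The only step that is not a verbatim translation is the flow analog of \Ref{THMHopfCoudeneMixing2sided}, hence of \Ref{THMHopfCoudeneMixing} underneath it; I expect no new idea is needed. Along any sequence $t_\ell\to+\infty$ one reruns the discrete argument with $f^n$ replaced by $f^{t_\ell}$: a norm-convergent sequence of Ces\`aro averages has an a.e.-convergent subsequence, the telescoping estimate yielding $W^{ss}$-saturation is insensitive to whether the time parameter is discrete or continuous, and the $W^{su}$ half follows from the orthocomplement argument with the time-reversed flow $f^{-t}$ in place of $f^{-1}$. No compactness, differentiability, or exponential rate enters, exactly as for maps. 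The other (entirely routine) point to keep in mind is that the index set is now $\R$, not $\N$, which is why the mixing argument above is phrased through sequences $t_n\to+\infty$ and Banach--Alaoglu.
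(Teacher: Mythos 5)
Your proposal is correct and follows exactly the route the paper takes: the paper's entire ``proof'' of \Ref{CORHopfCoudeneflows} is the remark that \Ref{THMHopfCoudeneMixing2sided} holds for flows mutatis mutandis, after which ergodicity and mixing follow just as in \Ref{CORHopfCoudeneErgodic} and \Ref{CORHopfCoudeneMixing}. You have simply written out in detail the same deduction (invariant functions as weak accumulation points of their own orbit; Banach--Alaoglu plus identification of the constant via pairing with $1$), so there is nothing to add.
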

Our aim is to obtain multiple mixing easily, but
\Ref{CORHopfCoudeneMixing} is interesting because of its weak
hypotheses. It applies where other methods do not \cite{Babillot}.
\section{Absolute continuity and product sets}
We now apply these results to the hyperbolic toral automorphisms of
\Ref{EXGenHypAutomorphism} to demonstrate the classical use of the Hopf
argument to get ergodicity, except that \Ref{CORHopfCoudeneMixing} yields
mixing instead.
\begin{proposition}\Label{PRPHopfargument2111}
If\/ $A\in GL(m,\Z)$ is hyperbolic, then the induced automorphism $F_A$ of\/
$\T^m$ is mixing with respect to Lebesgue measure.
\end{proposition}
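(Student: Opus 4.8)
The plan is to verify that the stable and unstable partitions of $F_A$ are jointly ergodic and then to apply \Ref{CORHopfCoudeneMixing}. Write $\R^m=E^s\oplus E^u$ for the $A$-invariant splitting into the sums of the generalized eigenspaces with eigenvalues of modulus $<1$ and $>1$ respectively; hyperbolicity of $A$ is exactly the assertion that no eigenvalue lies on the unit circle, so this is a genuine direct sum and $\|A^n\rest{E^s}\|\to0$, $\|A^{-n}\rest{E^u}\|\to0$ as $n\to\infty$. Let $\pi\colon\R^m\to\T^m$ be the projection. For $w\in E^s$ one has $d\big(F_A^n(\pi x),F_A^n(\pi(x+w))\big)=d\big(\pi(A^nx),\pi(A^nx+A^nw)\big)\le\|A^nw\|\to0$, so $\pi(x+E^s)\subseteq W^{ss}(\pi x)$; the same computation for $F_A^{-1}$ gives $\pi(x+E^u)\subseteq W^{su}(\pi x)$. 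Only these inclusions, not the (correct but unneeded) reverse ones, will be used.

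Next I would show that any $\varphi\in L^2(\T^m)$ that is both $W^{ss}$- and $W^{su}$-saturated is a.e.\ constant. Fix a set $G$ of full Lebesgue measure witnessing both saturations in the sense of \Ref{DEFWsSubordinate}. The one idea needed is that saturation---which a priori holds only along leaves and only off a null set---upgrades to honest invariance under a transitive group of translations: for fixed $w\in E^s$, the sets $G$ and $G-w$ both have full measure, so for a.e.\ $x$ both $\pi x$ and $\pi(x+w)\in W^{ss}(\pi x)$ lie in $G$, whence $\varphi(\pi(x+w))=\varphi(\pi x)$; thus $\varphi$ is invariant in $L^2$ under translation by $\pi w$. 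Running this for $E^s$ and (via $F_A^{-1}$) for $E^u$, and then composing---translations commute and preserve Lebesgue measure, so the a.e.\ identities chain---shows that $\varphi$ is invariant under translation by every element of $E^s+E^u=\R^m$.

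Finally, choosing $w=(w_1,\dots,w_m)\in\R^m$ with $1,w_1,\dots,w_m$ linearly independent over $\Q$ (the excluded set is a countable union of hyperplanes), the rotation $z\mapsto z+\pi w$ of $\T^m$ is ergodic by the Kronecker--Weyl equidistribution theorem, and the function $\varphi$, being invariant under it, is a.e.\ constant. This establishes joint ergodicity of $W^{ss}$ and $W^{su}$, and \Ref{CORHopfCoudeneMixing} then gives that $F_A$ is mixing.

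I do not anticipate a real obstacle here; the proposition is essentially a vehicle to illustrate the Hopf argument in the linear model, where absolute continuity and the local product structure are automatic. The only point that rewards a moment's attention is the elementary measure-theoretic upgrade from leafwise saturation to genuine translation invariance, i.e.\ the observation that $G\cap(G-w)$ still has full measure; one could instead package this as a Fubini argument over the linear product charts modelled on $E^s\times E^u$, in keeping with the heading of this section.
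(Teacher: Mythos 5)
Your proof is correct, and it reaches the same joint-ergodicity hypothesis of \Ref{CORHopfCoudeneMixing} as the paper does, but by a genuinely different route in the key step. The paper works locally: it covers $\T^m$ by product neighborhoods $D^-\times D^+$ modelled on $E^-\times E^+$ and runs a Fubini argument to show that a $W^{ss}$- and $W^{su}$-saturated $\varphi$ is a.e.\ constant on each such chart, then globalizes. You instead exploit the group structure of the torus: the observation that $G\cap(G-\pi w)$ still has full measure upgrades leafwise saturation to genuine $L^2$-invariance under translation by any $\pi w$ with $w\in E^s$ (and, via $f^{-1}$, $w\in E^u$), these invariances compose to give invariance under all of $\pi(E^s+E^u)=\T^m$, and a single ergodic rotation (Kronecker--Weyl) finishes. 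Your chaining of a.e.\ identities is legitimate precisely because translations preserve Lebesgue measure, and you correctly note that only the inclusions $\pi(x+E^{s/u})\subseteq W^{ss/su}(\pi x)$ are needed. The trade-off is worth being aware of: your argument is shorter and avoids both Fubini and the ``locally a.e.\ constant implies a.e.\ constant'' step, but it is tied to the algebraic setting, whereas the paper's Fubini-on-product-sets argument is deliberately written as a template --- it is exactly the shape of the general absolute-continuity argument (\Ref{DEFAbsCtsLocPrStr}, \Ref{PRPAnosovMixing}) that the section is introducing, which is why the paper takes the longer road here. (Incidentally, once you have invariance under \emph{all} translations you could also conclude by Fourier coefficients without invoking an ergodic rotation, but that is a matter of taste.)
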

\begin{proof}
For $q\in\T^m$ the \emph{stable} and \emph{unstable subspaces} at $q$ in
\eqref{eqDefStableSet} are
\[
W^{ss}(q)=\pi(E^-+q)\text{ and }W^{su}(q)=\pi(E^++q),
\]
where $E^\pm$ are the contracting and expanding subspaces of\/ $A$ and
$\pi\colon\R^m\to\T^m$ is the projection. Suppose $\varphi\in L^2$ is
$W^{ss}$- and $W^{su}$-saturated, \ie there is a conull\/ $G\subset\T^n$ such that
$x,y\in G$, $y\in W^{ss}(x)\Rightarrow\varphi(x)=\varphi(y)$ and $x,y\in G$,
$y\in W^{su}(x)\Rightarrow\varphi(x)=\varphi(y)$. We will prove that
$\varphi\aeq\const$, and \Ref{CORHopfCoudeneMixing} then implies mixing.

Let $D^\pm\subset E^\pm$ be small disks and $q\in \T^m$. Then $q$ has a
neighborhood that is up to rotation and translation of the form $D^-\times
D^+$, and
$C\dfn G\cap(D^-\times D^+)$
has full Lebesgue measure in $D^-\times D^+$, \ie if\/ $\mu^\pm$ denotes the
normalized Lebesgue measure on $D^\pm$ and $\mu=\mu^-\times\mu^+$, then
$\int_{D^-\times D^+}\chi_C\,d\mu=1$. By the Fubini Theorem
\[
1
=
\int_{D^-\times D^+}\!\!\chi_C\,d\mu
=
\int_{D^-}\int_{D^+}\!\!\chi_C\,d\mu^+\,d\mu^-,
\ \text{so}\ 
\int_{D^+}\!\!\chi_C(u,\cdot)\,d\mu^+=1\text{ for }\mu^-\text{-a.e.\ }u\in D^-.
\]
Fix such a $u_0\in D^-$, and note that by construction
$C^-\dfn D^-\times\big(C\cap(\{u_0\}\times D^+)\big)$
has full Lebesgue measure.\footnote{One might at this time revisit \Ref{REMsingular}.}
If\/ $(u,v),(u',v')\in C^-\cap C$, a set of full measure, then
\[
\varphi(u,v)
=
\varphi(u_0,v)
=
\varphi(u_0,v')
=
\varphi(u',v').
\]
This applies to any such neighborhood of an arbitrary $q\in\T^n$, so $\varphi\aeq\const$
\end{proof}
This is how Hopf established the ergodicity of geodesic flows of manifolds
of negative curvature. The method was extended to geodesic flows of
higher-dimensional manifolds by Anosov. The pertinent discrete-time
counterpart are Anosov diffeomorphisms, which include the $F_A$ above. As
the preceding argument shows, higher-dimensionality does not directly
affect the intrinsic difficulty of the argument. The barrier that Hopf
faced and Anosov overcame is related to the use of the Fubini Theorem
above---except in Hopf's context, where local product neighborhoods are
indeed diffeomorphic to euclidean patches, one needs to establish the
\emph{absolute continuity} of the invariant foliations on each such patch
to apply the Fubini Theorem (see, \eg\cite[Chapter 6]{BrinStuck}). This is
a natural point at which to define center-stable and -unstable sets 
\[
\begin{aligned}
W^{cs}(x)&\dfn\{y\in X\st\{d(f^n(x),f^n(y))\}_{n\in\N}\text{ is bounded}\},\\
W^{cu}(x)&\dfn\{y\in X\st\{d(f^{-n}(x),f^{-n}(y))\}_{n\in\N}\text{ is bounded}\}.
\end{aligned}
\]
\begin{definition}\Label{DEFAbsCtsLocPrStr}
Let\/ $(X,\mu)$ be a metric Borel probability space, $f\colon X\to X$
invertible $\mu$-preserving, $i\in\{ss,cs\}$, $j\in\{su,cu\}$.
We say that $V\subset X$ is an $(i,j)$-\emph{product set} if
for $x\in V$ and $k\in\{i,j\}$ there are $\Wloc^k(x)\subset W^k(x)$
and a measurable map $[\cdot,\cdot]\colon V\times V\to X$ with
$[x,y]\in\Wloc^i(x)\cap\Wloc^j(y)$.

We say that $W^i$ is \emph{absolutely continuous} on  an $(i,j)$-\emph{product set}  $V$ (with respect to
$\mu$) if for each $x\in V$ and $k\in\{i,j\}$  there are measures $\mu^k_x$ on $\Wloc^k(x)$ with
$\mu^j_x(N)=0\Rightarrow\mu^j_y([N,y])=0$ and $\phi\in L^1(\mu)\Rightarrow\int_V\phi
d\mu=\int_{\Wloc^i(z)}\int_{\Wloc^j(x)}\phi\;d\mu^j_x\;d\mu^i_z(x)$.
\end{definition}
Then one obtains \cite[Chapter 6]{BrinStuck}:
\begin{proposition}\Label{PRPAnosovMixing}
Volume-preserving Anosov diffeomorphisms are mixing.
\end{proposition}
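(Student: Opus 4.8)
The plan is to reduce to \Ref{CORHopfCoudeneMixing}: for a volume-preserving Anosov diffeomorphism $f$ of a compact manifold $X$ with invariant volume $\mu$, it suffices to show that $W^{ss}$ and $W^{su}$ are jointly ergodic, \ie that every $\varphi\in L^2(\mu)$ that is simultaneously $W^{ss}$- and $W^{su}$-saturated is $\mu$-a.e.\ constant; mixing then follows at once.

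First I would isolate the two ingredients from hyperbolic theory. The \emph{structural} one: for a $C^1$ Anosov diffeomorphism the sets $W^{ss}(x)$, $W^{su}(x)$ of \eqref{eqDefStableSet} coincide locally with the injectively immersed strong stable and strong unstable manifolds, which depend continuously on $x$ and are transverse with complementary dimensions; hence every point of $X$ has a neighborhood $V$ that is an $(ss,su)$-product set in the sense of \Ref{DEFAbsCtsLocPrStr}, the local leaves $\Wloc^{ss}$, $\Wloc^{su}$ and the holonomy map $[\cdot,\cdot]$ being supplied by the local product structure. The \emph{analytic} one, which is the substance of the proposition, is Anosov's absolute continuity theorem: on each such $V$ the foliations $W^{ss}$ and $W^{su}$ are absolutely continuous with respect to $\mu$ in the sense of \Ref{DEFAbsCtsLocPrStr}, \ie there are conditional leaf measures $\mu^{ss}_x$, $\mu^{su}_x$ disintegrating $\mu|_V$ and for which the stable and unstable holonomies carry null sets to null sets (see \cite[Chapter 6]{BrinStuck}).

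Granting this, joint ergodicity follows by running the Fubini argument from the proof of \Ref{PRPHopfargument2111} intrinsically on $V$, using the conditional measures in place of product Lebesgue measure. Let $\varphi\in L^2(\mu)$ be $W^{ss}$- and $W^{su}$-saturated with conull good set $G$. Disintegrating $\mu|_V$ along local unstable leaves produces a single local unstable leaf $L$ on which $G$ is conull, so by $W^{su}$-saturation $\varphi$ equals a constant $c$ a.e.\ on $L\cap G$. Sliding $L\cap G$ along stable leaves by the (null-set preserving) holonomy fills out a subset of full $\mu^{su}_y$-measure of a.e.\ nearby local unstable leaf $\Wloc^{su}(y)$, each image point lying on the stable leaf of its preimage and, for a.e.\ $y$, itself in $G$; $W^{ss}$-saturation then forces $\varphi=c$ a.e.\ on $\Wloc^{su}(y)$, and re-assembly via the disintegration formula gives $\varphi\aeq c$ on $V$. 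Since $X$ is connected and, by compactness, is covered by finitely many overlapping product neighborhoods on whose positive-measure overlaps the constants must agree, $\varphi\aeq c$ on all of $X$. Thus $W^{ss}$, $W^{su}$ are jointly ergodic and \Ref{CORHopfCoudeneMixing} yields mixing.

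The main obstacle is the absolute continuity of the invariant foliations; in the language of the discussion following \Ref{PRPHopfargument2111}, this is the barrier that Hopf faced and Anosov overcame, and it is the one ingredient that is not soft. Everything else is already available: the step from $W^{ss}$- and $W^{su}$-saturation to constancy is the Fubini/holonomy argument of \Ref{PRPHopfargument2111}, and the step from joint ergodicity to mixing is \Ref{CORHopfCoudeneMixing}. As in \Ref{PRPHopfargument2111}, in fact only the absolute continuity of one of the two foliations is used, and compactness enters only to furnish the local product neighborhoods and to reduce to finitely many of them.
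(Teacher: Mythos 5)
Your proposal is correct and follows exactly the route the paper intends: the paper gives no explicit proof here but derives the proposition from the preceding discussion, namely Anosov's absolute continuity theorem on local $(ss,su)$-product sets (cited from \cite[Chapter 6]{BrinStuck}), the Fubini/holonomy argument of \Ref{PRPHopfargument2111} adapted to the conditional leaf measures of \Ref{DEFAbsCtsLocPrStr}, and then \Ref{CORHopfCoudeneMixing} to upgrade joint ergodicity to mixing. You have simply written out the details the paper leaves implicit, including the correct identification of absolute continuity as the only non-soft ingredient.
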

\Ref{CORHopfCoudeneMixing} can be applied well beyond this completely
hyperbolic case. With the terminology of \cite{BurnsWilkinson} we have
\begin{theorem}\Label{THMBurnsWilkinson}
Let $f$ be $C^2$, volume-preserving, partially hyperbolic, and
center bunched. If $f$ is essentially accessible, then $f$ is mixing.
\end{theorem}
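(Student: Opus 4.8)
The plan is to deduce mixing from \Ref{CORHopfCoudeneMixing}, so it suffices to prove that $W^{ss}$ and $W^{su}$ are jointly ergodic: every $\varphi\in L^2(\vol)$ that is simultaneously $W^{ss}$- and $W^{su}$-saturated is constant a.e. Fix such a $\varphi$ and, as in the Remark following \Ref{DEFWsSubordinate}, pass to the functions $\varphi^s$ and $\varphi^u$ that are everywhere constant along strong stable, respectively strong unstable, leaves and that agree with $\varphi$ off a $\vol$-null set; in particular $\varphi^s=\varphi^u$ off a null set.

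The analytic input is that for a $C^2$ partially hyperbolic diffeomorphism the strong stable and strong unstable foliations are absolutely continuous (Brin--Pesin, Pugh--Shub; compare the discussion around \Ref{DEFAbsCtsLocPrStr} and \cite[Chapter 6]{BrinStuck}), so stable and unstable holonomies preserve $\vol$-null sets. Center bunching upgrades this to the quantitative Burns--Wilkinson statement: both foliations are ``julienne quasiconformal'' \cite{BurnsWilkinson}, so that the filtration of a point by its dynamically defined julienne neighborhoods is carried with bounded distortion by stable and by unstable holonomies, and a point that is a density point of a set with respect to this filtration is carried to a density point of its image.

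The heart of the argument is to promote bi-saturation to essential accessibility-saturation: there is a conull $G'$ such that $x,y\in G'$ with $y$ in the accessibility class of $x$ (that is, reached from $x$ by an $su$-path, a finite concatenation of strong stable and strong unstable arcs) force $\varphi^s(x)=\varphi^s(y)$. Indeed, $\vol$-a.e.\ point is a julienne density point of the conull set $G$ on which $\varphi=\varphi^s=\varphi^u$, and one may arrange the corners of an $su$-path to lie in this set of density points; each leg is then a strong stable or strong unstable holonomy between $G$-density points, which by the previous paragraph again carries $G$-density points to $G$-density points, while $\varphi^s$ is constant along strong stable legs, $\varphi^u$ along strong unstable legs, and $\varphi^s=\varphi^u$ at every $G$-density point---so the common value is unchanged across each leg, and iterating over the finitely many legs gives the claim. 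Consequently $\{\varphi^s<c\}$ is, modulo $\vol$-null sets, a union of accessibility classes for each $c\in\R$; essential accessibility forces each such set to have measure $0$ or $1$, so $\varphi^s$, hence $\varphi$, is constant a.e., and \Ref{CORHopfCoudeneMixing} yields mixing.

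The main obstacle is the third paragraph: transporting the density-point property along $su$-paths. The Fubini-in-a-product-chart argument of \Ref{PRPHopfargument2111} is unavailable because $E^{ss}\oplus E^c\oplus E^{su}$ carries no two-dimensional product structure adapted to joining nearby points, and one genuinely needs the center-bunching estimates that make stable and unstable holonomies respect a common density filtration---this is the technical core (the julienne machinery of \cite{BurnsWilkinson}), while the reduction to joint ergodicity via \Ref{CORHopfCoudeneMixing} and the passage from a saturated function to a constant are routine.
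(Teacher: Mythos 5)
Your proposal is correct and follows essentially the same route as the paper: reduce mixing to joint ergodicity of $W^{ss}$ and $W^{su}$ via \Ref{CORHopfCoudeneMixing}, and obtain joint ergodicity from essential accessibility together with the Burns--Wilkinson theorem that bi-essentially saturated sets are essentially bisaturated. The only difference is presentational: the paper simply cites \cite[Corollary 5.2]{BurnsWilkinson} for this step, whereas you sketch the julienne density point argument behind it (correctly flagging it as the technical core that must be imported rather than reproved).
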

\begin{proof}
Every bi-essentially saturated set is essentially bisaturated
\cite[Corollary 5.2]{BurnsWilkinson}, so \Ref{CORHopfCoudeneMixing} applies
by essential accessibility \cite[p.\ 472]{BurnsWilkinson}.
\end{proof}
The main result of Burns and Wilkinson \cite[Theorem 0.1]{BurnsWilkinson}
is that $f$ is ergodic and in fact has the Kolmogorov property. They obtain
ergodicity from \cite[Corollary 5.2]{BurnsWilkinson} by the Hopf argument,
so by \Ref{CORHopfCoudeneMixing} one obtains mixing directly. The
Kolmogorov property is then obtained by invoking a result of Brin and Pesin
\cite{BrinPesin} that the Pinsker algebra is bi-essentially saturated in
this context. Our point is that here, too, the Hopf argument alone provides
mixing rather than just ergodicity without any ``high-tech'' ingredients.
\section{Applications: Multiple mixing}\Label{SThouvenot}
\Ref{CORHopfCoudeneMixing} says that $f$ is mixing if $\varphi\in
L^2(\mu)$, $W^{ss}$- and $W^{su}$-saturated $\Rightarrow\varphi\aeq\const$,
and in the previous section we established the ``if'' part of the
statement. Likewise, \Ref{CORWsergodicimpliesmixing} says that if every
$W^{ss}$-saturated $\varphi\in L^2$ is constant a.e., then $f$ is multiply
mixing, and we now (on page \pageref{eqsaturationgiveszerolimit}) verify
this ``if'' statement---in remarkable generality, such as in the original
context (of uniformly hyperbolic dynamical systems) in which the Hopf
argument applies in the manner shown in \Ref{SApplications}.
The result does not use the contraction on\/ $W^{ss}$; thus it also applies
to $W^{cs}$, such as in \Ref{Waltersautomorphism} or to the
\emph{weak}-stable foliation of a flow.
\begin{theorem}\Label{THMAbstractMMix}
If\/ $(X,\mu)$ is a metric Borel probability space, $f\colon X\to X$
invertible $\mu$-preserving ergodic, $i\in\{ss,cs\}$. If\/ $W^i$ is
absolutely continuous on an $(i,su)$-product set $V$, and $\mu(f^{-1}(V)
\cap V)>0$,
then $W^i$ is ergodic.
\end{theorem}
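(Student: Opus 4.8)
The goal is to show that any $W^i$-saturated $\varphi\in L^2(\mu)$ is $\mu$-a.e.\ constant. Since $f$ is ergodic, it suffices to produce a positive-measure set on which $\varphi$ is a.e.\ constant, and then spread that over $X$ using invariance: if $\varphi$ is constant on a positive-measure set $V$ and $W^i$-saturated, the ergodicity of $f$ forces $\varphi\aeq\const$ on $X$ (the saturated function $\varphi^i$ built as in the Remark after \Ref{DEFWsSubordinate} is $f$-invariant in the a.e.\ sense needed, hence constant). So the real content is a \emph{local} statement: on the $(i,su)$-product set $V$, absolute continuity of $W^i$ forces a $W^i$-saturated $L^2$ function to be a.e.\ constant.

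First I would run the Fubini/product-set argument exactly as in the proof of \Ref{PRPHopfargument2111}. Let $\varphi$ be $W^i$-saturated via a conull set $G$. Restricting to $V$, the set $C\dfn G\cap V$ is conull in $V$. By the absolute-continuity disintegration $\int_V\phi\,d\mu=\int_{\Wloc^i(z)}\int_{\Wloc^{su}(x)}\phi\,d\mu^{su}_x\,d\mu^i_z(x)$, the set $C$ meets $\mu^{su}_x$-a.e.\ $\Wloc^{su}$-plate in a $\mu^{su}_x$-conull subset, for $\mu^i_z$-a.e.\ $x$; fix one good plate $\Wloc^{su}(x_0)$. Now for a second point $y$ with $[x_0,y]$ defined, the quasi-invariance condition $\mu^{su}_x(N)=0\Rightarrow\mu^{su}_y([N,y])=0$ transports the conull subset of $C$ on $\Wloc^{su}(x_0)$ to a $\mu^{su}_y$-conull subset of $\Wloc^{su}(y)$. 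Along each $\Wloc^i$-plate $\varphi$ is a.e.\ constant by $W^i$-saturation; along $\Wloc^{su}(x_0)$ this constant value is itself some function, but comparing through the bracket map $[\cdot,\cdot]$ with the good plate $\Wloc^{su}(x_0)$ shows — by chasing the identity $\varphi(a)=\varphi([a,x_0])=\varphi([x_0,b])=\varphi(b)$ along points lying in the relevant conull sets, just as in the displayed four-term chain in the proof of \Ref{PRPHopfargument2111} — that $\varphi$ is a.e.\ constant on all of $V$. Here one uses that \emph{only} $W^i$-saturation is assumed (not $W^{su}$-saturation), so the plate $\Wloc^{su}(x_0)$ serves purely as a ``transversal ruler'': $\varphi$ need not be constant along $\Wloc^{su}$, but the value of the $W^i$-constant, read off at the single transversal $\Wloc^{su}(x_0)$, is forced to be one number because the bracket carries the a.e.\ structure across.

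Then I would globalize. We have $\varphi\aeq c$ on $V$ with $\mu(V)>0$. The hypothesis $\mu(f^{-1}(V)\cap V)>0$ is what lets the value $c$ propagate: on $f^{-1}(V)\cap V$ we have $\varphi\aeq c$, and applying the same local argument on the product set $f^{-1}(V)$ (or rather using that $\varphi$ being $W^i$-saturated and a.e.\ equal to $c$ on a positive-measure subset of $f^{-1}(V)$ forces $\varphi\aeq c$ on all of $f^{-1}(V)$) gives $\varphi\aeq c$ on $V\cup f^{-1}(V)$, hence on $\bigcup_n f^{-n}(V)$; the latter is $f^{-1}$-invariant up to the seed $V$, so by ergodicity of $f$ it is conull and $\varphi\aeq c$ on $X$.

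\textbf{Main obstacle.}
The delicate point is the measurability and the exact bookkeeping of ``good'' plates in the disintegration: I must ensure that the exceptional $\mu^i_z$-null set of bad base points and the fiberwise $\mu^{su}_x$-null sets combine (via Fubini for the disintegration) into a genuine $\mu$-null set in $V$, and that the quasi-invariance clause $\mu^{su}_x(N)=0\Rightarrow\mu^{su}_y([N,y])=0$ can be applied simultaneously for $\mu$-a.e.\ pair — i.e.\ that the transport of conull sets along the bracket is itself measurable in the pair $(x,y)$. This is exactly the place where absolute continuity (rather than mere nonsingularity of individual plate maps) is essential, and where one must be a little careful, though no new idea beyond Fubini is needed. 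The globalization step, by contrast, is routine once the local statement is in hand.
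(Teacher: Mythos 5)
Your central local claim---that on the $(i,su)$-product set $V$ absolute continuity alone forces a $W^i$-saturated function to be a.e.\ constant---is false, and this is where the argument breaks. The four-term chain from the proof of \Ref{PRPHopfargument2111} uses \emph{both} saturations: in $\varphi(a)=\varphi([a,x_0])=\varphi([x_0,b])=\varphi(b)$ the first equality is legitimate ($[a,x_0]\in \Wloc^i(a)$), but $[a,x_0]$ lies on $\Wloc^{su}(x_0)$ while $[x_0,b]$ lies on $\Wloc^{i}(x_0)$, so the middle equality, and likewise $\varphi([x_0,b])=\varphi(b)$, require $W^{su}$-saturation, which is exactly what is not assumed here. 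A function of the form $\varphi(u,v)=g(v)$ in product coordinates (constant on each $W^i$-plate, arbitrary across plates) is $W^i$-saturated on $V$ and not constant; your ``transversal ruler'' reads off precisely the arbitrary function $g$ along $\Wloc^{su}(x_0)$, and nothing forces $g$ to be constant. Since the local step fails, the globalization collapses too; note also that a $W^i$-saturated function is not automatically $f$-invariant ($\varphi\circ f\neq\varphi$ is perfectly compatible with $W^i$-saturation a priori), so ergodicity of $f$ cannot be applied to $\varphi$ directly, and constancy on a positive-measure set is not something ergodicity upgrades to global constancy.

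The dynamics must enter in an essential way, and this is what the paper's proof does. One forms the $su$-holonomy $T(x)=[f(x),x]$ on $V_f=f^{-1}(V)\cap V$: since $T(x)\in W^i(f(x))$, $W^i$-saturation gives $\varphi\circ T=\varphi\circ f$ a.e.\ on $V_f$, converting the problem into showing $\varphi\circ T=\varphi$. Because $T(x)\in W^{su}(x)$, the conjugates $T_n=f^{-n}\circ T\circ f^{n}$ converge to the identity (Thouvenot's device, after Egorov on a positive-measure set $U$); absolute continuity makes the Radon--Nikodym derivatives of ${T_n}_*\mu$ uniformly integrable, so $\|\varphi\circ T_n-\varphi\|_1\to0$ by \Ref{LEMAbstractMMix}; and the Mean Ergodic Theorem together with ergodicity of $f$ turns the resulting smallness of $\mu\bigl(f^{-n}(U)\cap\{|\varphi\circ f-\varphi|>\varepsilon\}\bigr)$ into $\mu\bigl(\{|\varphi\circ f-\varphi|>\varepsilon\}\bigr)=0$. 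This is the mechanism your proposal is missing: the hypothesis $\mu(f^{-1}(V)\cap V)>0$ is not a bookkeeping device for propagating a constant, it is what makes the holonomy $T$---and hence the comparison of $\varphi$ with $\varphi\circ f$---available on a set of positive measure.
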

\begin{corollary}\Label{CORAbstractMMix}
If\/ $(X,\mu)$ is a metric Borel probability space, $f\colon X\to X$
invertible $\mu$-preserving totally ergodic, $W^{ss}$ absolutely continuous
on an $(ss,su)$-product set $V$ with $\mu(V)>0$. Then $f$ is multiply
mixing.
\end{corollary}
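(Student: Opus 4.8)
The plan is to derive \Ref{CORAbstractMMix} by feeding the conclusion of \Ref{THMAbstractMMix} into \Ref{CORWsergodicimpliesmixing}. The one mismatch is that \Ref{THMAbstractMMix} requires $\mu\big(f^{-1}(V)\cap V\big)>0$ while the corollary only provides $\mu(V)>0$; I would close this gap by passing to a power of $f$, which is exactly where \emph{total} (as opposed to plain) ergodicity enters. By the Poincar\'e recurrence theorem, $\mu(V)>0$ yields an $n_0\ge1$ with $\mu\big(f^{-n_0}(V)\cap V\big)>0$.

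Put $g\dfn f^{n_0}$, which is invertible, $\mu$-preserving, ergodic (by total ergodicity of $f$), and satisfies $\mu\big(g^{-1}(V)\cap V\big)>0$. Next I would check that the data witnessing the hypotheses of \Ref{THMAbstractMMix} transport verbatim from $f$ to $g$: because $d(f^mx,f^my)\to0$ as $m\to\infty$ implies $d(g^mx,g^my)\to0$ (and likewise with $f^{-1}$ and $g^{-1}$), the stable and unstable sets of $f$ are contained in those of $g$, so $\Wloc^{ss}(x)$ and $\Wloc^{su}(x)$ are still local leaves of the stable and unstable sets of $g$, and the same bracket $[\cdot,\cdot]$, transverse measures, and disintegration identity of \Ref{DEFAbsCtsLocPrStr} exhibit $V$ as an $(ss,su)$-product set for $g$ on which $W^{ss}$ is absolutely continuous. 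Applying \Ref{THMAbstractMMix} to $g$ with $i=ss$ then gives that every $W^{ss}$-saturated $\varphi\in L^2(\mu)$ is constant a.e., and \Ref{CORWsergodicimpliesmixing}, applied to $g$, shows that $g=f^{n_0}$ is multiply mixing.

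It remains to descend from multiple mixing of $f^{n_0}$ to multiple mixing of $f$, a step using only $f$-invariance of $\mu$. Fix $N$ and $\varphi_1,\dots,\varphi_N$, and let $\psi$ be a weak accumulation point of $\prod_{i=1}^N\varphi_i\circ f^{\sum_{j\le i}n_j}$ with $n_i\to\infty$. Passing to a subsequence (possible since there are finitely many residues modulo $n_0$ and finitely many indices), I may assume each $n_i$ is constant modulo $n_0$, so $\sum_{j\le i}n_j=n_0 t_i+\sigma_i$ with $\sigma_i\in\{0,\dots,n_0-1\}$ fixed, $t_i\to\infty$, and $t_{i+1}-t_i\to\infty$; hence, with $\tilde\varphi_i\dfn\varphi_i\circ f^{\sigma_i}$, the product equals $\prod_{i=1}^N\tilde\varphi_i\circ g^{\,t_i}$, with $t_i\to\infty$ and consecutive gaps $t_{i+1}-t_i\to\infty$ --- exactly the configuration governed by $N$-mixing of $g$. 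Since $g$ is multiply mixing, $\psi$ is therefore constant, so \Ref{PRPMultiMixLemma} gives that $f$ is $N$-mixing; as $N$ was arbitrary, $f$ is multiply mixing.

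I expect the main obstacle to be this structural detour rather than any calculation: \Ref{THMAbstractMMix} cannot be invoked for $f$ itself under the weak hypothesis $\mu(V)>0$, so one must work with the return power $f^{n_0}$ (whose ergodicity is precisely what total ergodicity supplies), carry the $(ss,su)$-product-set structure and the absolute continuity along $f\mapsto f^{n_0}$, and then climb back down from $f^{n_0}$ to $f$. Each of these transfers, and the modulo-$n_0$ bookkeeping in the descent, is routine once the strategy is in place.
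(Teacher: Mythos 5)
Your proof is correct, and its skeleton is the paper's: Poincar\'e recurrence produces $n_0$ with $\mu(f^{-n_0}(V)\cap V)>0$, total ergodicity makes $f^{n_0}$ ergodic, and \Ref{THMAbstractMMix} applied to $f^{n_0}$ (with the product structure transported as you describe) yields ergodicity of the stable partition. The one place you diverge is the final step, and it costs you an entire extra argument: you apply \Ref{CORWsergodicimpliesmixing} to $g=f^{n_0}$ and then climb back down to $f$ via the residue-class bookkeeping. This descent is sound (the reparametrization $\sum_{j\le i}n_j=n_0t_i+\sigma_i$ with $t_{i+1}-t_i\to\infty$ does reduce $N$-mixing of $f$ to $N$-mixing of $g$), but it is unnecessary: what \Ref{THMAbstractMMix} applied to $f^{n_0}$ delivers is that every $W^{ss}$-saturated function --- saturated for the stable partition of $f$ itself, since the local leaves of the product set lie in $W^{ss}_f\subset W^{ss}_{f^{n_0}}$ and the proof only needs $T_n(x)\in W^{ss}(f^{n_0}(x))$ --- is $f^{n_0}$-invariant, hence constant by ergodicity of $f^{n_0}$. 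So \Ref{CORWsergodicimpliesmixing} can be invoked for $f$ directly, which is what the paper does, and the modulo-$n_0$ descent evaporates. In short: same idea, with one avoidable detour at the end.
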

\begin{proof}
The Poincar\'e Recurrence Theorem gives an $N\in\N$ with $\mu(f^{-N}(V)\cap V)>0$. Apply \Ref{THMAbstractMMix} to
$f^N$, then \Ref{CORWsergodicimpliesmixing} to $f$.
\end{proof}
\Ref{THMHopfErgodic} makes it easy to establish total ergodicity. For
instance:
\begin{theorem}
Let $X$ be a separable metric space, $\mu$ a Borel probability measure with
connected support, $f\colon X\to X$ an invertible $\mu$-preserving
transformation. If $W^{ss}$ is absolutely continuous on open
$(ss,su)$-product sets that cover the support of $\mu$, then $f$ is totally
ergodic and thus multiply mixing by \Ref{CORAbstractMMix}.
\end{theorem}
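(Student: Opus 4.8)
The plan is to reduce the statement to joint ergodicity of $W^{ss}$ and $W^{su}$. Once I establish that every $\varphi\in L^2(\mu)$ which is both $W^{ss}$-saturated and $W^{su}$-saturated is a.e.\ constant, \Ref{THMHopfErgodic} gives total ergodicity of $f$, and then \Ref{CORAbstractMMix}---applied with $V$ any one of the open product sets of the cover, which is open and meets $\supp\mu$ and so has $\mu(V)>0$---yields multiple mixing. Thus the real work is joint ergodicity, and I would carry it out in two stages: a local stage on each product set, and a patching stage using that $\supp\mu$ is connected.

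For the local stage, fix such a $\varphi$ with an associated conull set $G$ on which it is $W^{ss}$- and $W^{su}$-constant, and let $V$ be one of the open $(ss,su)$-product sets on which $W^{ss}$ is absolutely continuous. I claim $\varphi$ is a.e.\ constant on $V$; this is the classical Hopf argument carried out in the proof of \Ref{PRPHopfargument2111}, with the Fubini Theorem there replaced by the disintegration $\int_V\phi\,d\mu=\int_{\Wloc^{ss}(z)}\int_{\Wloc^{su}(x)}\phi\,d\mu^{su}_x\,d\mu^{ss}_z(x)$ of \Ref{DEFAbsCtsLocPrStr}. Since $\mu(V\setminus G)=0$, this disintegration shows that for $\mu^{ss}_z$-a.e.\ $x$ the set $G\cap\Wloc^{su}(x)$ is $\mu^{su}_x$-conull, and on it $\varphi$ equals a single value $c(x)$ because $\Wloc^{su}(x)\subset W^{su}(x)$. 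For two such $x,x'$ the holonomy $w\mapsto[w,x']$ carries $\Wloc^{su}(x)$ into $\Wloc^{su}(x')$ along individual $W^{ss}$-leaves (as $[w,x']\in\Wloc^{ss}(w)$) and is absolutely continuous by \Ref{DEFAbsCtsLocPrStr}, whence $\mu^{su}_{x'}$-a.e.\ point of $\Wloc^{su}(x')$ is some $[w,x']$ with $w\in G\cap\Wloc^{su}(x)$ and itself lies in $G$; then $c(x')=\varphi([w,x'])=\varphi(w)=c(x)$. So $c(\cdot)$ is essentially constant and $\varphi$ is a.e.\ constant on $V$.

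For the patching stage, since $X$ is separable it is Lindel\"of, so the given cover of $\supp\mu$ has a countable subcover $\{V_n\}$, and $\varphi\aeq c_n$ on each $V_n$. It remains to see the $c_n$ all coincide. Call $V_m,V_n$ \emph{linked} if they are joined by a finite chain of the $V_n$ in which consecutive members have intersection meeting $\supp\mu$; this is an equivalence relation, and the unions $U_\alpha$ of its classes are open, cover $\supp\mu$, and satisfy $U_\alpha\cap U_\beta\cap\supp\mu=\emptyset$ for $\alpha\neq\beta$, so since $\supp\mu$ is connected and meets every $U_\alpha$ there is a single class: all $V_n$ are linked. In a linking chain every consecutive intersection is open and meets $\supp\mu$, hence has positive $\mu$-measure, which forces the two constants involved to agree; so all $c_n$ equal one constant $c$, and $\varphi\aeq c$ on $\bigcup_nV_n\supset\supp\mu$, hence $\mu$-a.e.\ on $X$. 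This is joint ergodicity, and the theorem follows as above. I expect the main obstacle to be the null-set bookkeeping of the local stage in the abstract language of \Ref{DEFAbsCtsLocPrStr}---making the surjectivity and absolute continuity of the $[\cdot,\cdot]$-holonomy do exactly what the Fubini slicing does in \Ref{PRPHopfargument2111}; the patching is routine, its one delicate point being that consecutive product sets in a linking chain overlap in a set of positive measure, which is exactly why connectedness of $\supp\mu$ is the right hypothesis.
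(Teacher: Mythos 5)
Your proposal is correct and follows exactly the route of the paper's (very terse) proof: reduce to joint ergodicity of $W^{ss}$ and $W^{su}$, show a doubly saturated function is a.e.\ constant on each product set via absolute continuity (generalizing the Fubini step of \Ref{PRPHopfargument2111}), conclude global a.e.\ constancy from connectedness of $\supp\mu$, and then invoke \Ref{THMHopfErgodic} and \Ref{CORAbstractMMix}. You simply supply in full the holonomy bookkeeping and the Lindel\"of/chaining argument that the paper compresses into the sentences ``hence by absolute continuity a.e.\ constant on these product sets'' and ``a function on a connected set is a.e.\ constant if it is a.e.\ locally constant.''
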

\begin{proof}
Apply \Ref{THMHopfErgodic}: An $f$-invariant function is $W^{ss}$- and
$W^{su}$-saturated, hence by absolute continuity a.e.\ constant on these
product sets. A function on a connected set is a.e.\ constant if it is
a.e.\ locally constant.
\end{proof}
\begin{remark}
This applies to volume-preserving Anosov diff\-eo\-morph\-isms
\cite[Chapter 6]{BrinStuck} but we do not use exponential behavior,
differentiability or compactness.
\end{remark}
\begin{theorem}\Label{CORBilliardsMMixing}
The Liouville measure for dispersing billiards (\Ref{EXDispersing}) and for
polygonal billiards with pockets (\Ref{EXPolygonalPockets}) is multiply
mixing.
\end{theorem}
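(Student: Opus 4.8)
The plan is to apply \Ref{CORAbstractMMix} to each of the two billiard maps, so the work reduces to verifying its three hypotheses: that the billiard map is invertible and $\mu$-preserving (where $\mu$ is the Liouville measure), that it is totally ergodic, and that the stable partition $W^{ss}$ is absolutely continuous on an $(ss,su)$-product set $V$ of positive measure. The first point is classical: the billiard map on the collision space is invertible (time reversal) and preserves the Liouville measure $\cos\theta\,dr\,d\theta$, so there is nothing to do there.

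For total ergodicity, I would invoke Sinai's Fundamental Theorem, cited in \Ref{EXDispersing} and \Ref{EXPolygonalPockets}: for dispersing billiards without corners or cusps (and likewise for polygonal billiards with pockets) the billiard map is hyperbolic, and the standard Hopf-type argument — which in our framework is precisely \Ref{CORHopfCoudeneMixing} or, at the level we actually need, \Ref{THMHopfErgodic} — yields ergodicity, hence mixing, hence in particular total ergodicity. (Alternatively one cites the known fact that these billiards are Bernoulli, a fortiori totally ergodic.) So the substance of the proof is entirely in the third hypothesis.

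For the absolute continuity of $W^{ss}$ on a product set, the point is that the theory of dispersing billiards supplies exactly this. Away from the singularity set (the grazing collisions and the preimages of corners/cusps), the collision space is foliated by local stable and unstable curves, and one has a local product structure: a small box $V$ around a generic phase point, with $[\cdot,\cdot]$ the intersection of the local unstable curve through one point with the local stable curve through the other. The absolute continuity of the stable holonomy with respect to the conditional Liouville measures on unstable curves — the analogue of the statement in \Ref{DEFAbsCtsLocPrStr} — is one of the fundamental structural results in the subject (see \cite[Theorem 5.42 and Chapter 5]{ChernovMakarian}). Since the singularity set has zero Liouville measure, such product sets $V$ have positive measure, and the same references cover the polygonal-billiards-with-pockets case via \cite{ChernovTroubetzkoy}. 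The main obstacle — or rather the only place where care is genuinely needed — is matching the abstract hypotheses of \Ref{DEFAbsCtsLocPrStr} to the billiard literature: the local stable/unstable curves are only continuous (not smooth), they are cut by the singularity set into countably many pieces, and the conditional measures and holonomies are controlled only after this decomposition. I would therefore spell out that our weak hypotheses (no smoothness of the foliations, no compactness of leaves, no exponential rates in the hypotheses themselves) are precisely what allows \Ref{THMAbstractMMix} and \Ref{CORAbstractMMix} to be applied here without any further regularity assumptions, so that the billiard results plug in directly. With the three hypotheses verified, \Ref{CORAbstractMMix} gives multiple mixing for each billiard, completing the proof.
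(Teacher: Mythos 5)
Your proposal is correct and follows essentially the same route as the paper: invoke Sinai's Fundamental Theorem to obtain product sets with absolutely continuous holonomies (citing the same results in \cite{ChernovMakarian} and \cite{ChernovTroubetzkoy}), use the two-sided Hopf argument (\Ref{CORHopfCoudeneMixing}) to get mixing and hence total ergodicity, and then apply \Ref{CORAbstractMMix} to conclude multiple mixing.
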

\begin{proof}
For dispersing billiards, Sinai's Fundamental Theorem of the theory of
dispersing billiards \cite[Theorem 5.70]{ChernovMakarian} provides product
sets \cite[Proposition 7.81]{ChernovMakarian} with absolutely continuous
holonomies \cite[Theorem 5.42]{ChernovMakarian}, which implies the absolute
continuity property we use. \Ref{CORHopfCoudeneMixing} then establishes
mixing and hence total ergodicity, which by \Ref{CORAbstractMMix} implies
mixing of all orders. This also works for polygonal billiards with pockets
\cite[Theorem 4.1]{ChernovTroubetzkoy}.
\end{proof}
\begin{theorem}
The Katok map (\Ref{EXKatokMap}) is multiply mixing.
\end{theorem}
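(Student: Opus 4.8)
The plan is to show that the Katok map satisfies the hypotheses of
\Ref{CORAbstractMMix}, so that multiple mixing follows immediately. The
Katok map $f$ is area-preserving on $\T^2$, and by construction
(\Ref{EXKatokMap}) it is totally ergodic; moreover its stable and unstable
partitions are homeomorphic to those of
$F_{\left(\begin{smallmatrix}2&1\\1&1\end{smallmatrix}\right)}$, in
particular the $W^{ss}$ and $W^{su}$ sets are (images of) lines, and through
a.e.\ point one has genuine local stable and unstable curves that cross
transversally. Thus the only thing that needs checking is that there is an
$(ss,su)$-product set $V$ with $\mu(V)>0$ on which $W^{ss}$ is absolutely
continuous in the sense of \Ref{DEFAbsCtsLocPrStr}.

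First I would recall the structure of the Katok construction: one starts
from the linear automorphism, picks a fixed point, and slows the dynamics
down near that point by a suitable time change / coordinate change that is
smooth away from the fixed point and degenerates only at it. Away from an
arbitrarily small neighborhood of the fixed point the map is a smooth
(indeed real-analytic, if one wishes) area-preserving diffeomorphism with
uniformly hyperbolic behaviour along the invariant cone field; the stable
and unstable foliations there are $C^1$ (or better), and on a small box
$V$ disjoint from the bad point one has an honest local product structure,
with the holonomy maps along $W^{ss}$ between two unstable transversals
being absolutely continuous with positive continuous Jacobian. This is
exactly the classical picture, and it gives the measure disintegration
$\int_V\phi\,d\mu=\int_{\Wloc^{su}(z)}\int_{\Wloc^{ss}(x)}\phi\,d\mu^{ss}_x\,d\mu^{su}_z(x)$
required in \Ref{DEFAbsCtsLocPrStr}, with $\mu(V)>0$. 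One can cite the
references for the Katok map (\cite[\S6.3]{BarreiraPesin07},
\cite{PesinSentiZhang}) for the regularity of the invariant foliations and
the absolute continuity of the holonomies on such a box.

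With $V$ in hand, \Ref{CORAbstractMMix} applies verbatim: by Poincar\'e
recurrence there is $N\in\N$ with $\mu(f^{-N}(V)\cap V)>0$; apply
\Ref{THMAbstractMMix} to $f^N$ (which is ergodic since $f$ is totally
ergodic) to conclude that $W^{ss}$ is ergodic, i.e.\ every
$W^{ss}$-saturated $\varphi\in L^2$ is a.e.\ constant; then
\Ref{CORWsergodicimpliesmixing} gives that $f$ is multiply mixing. The only
real content is the verification of the absolute-continuity hypothesis, and
the point worth stressing is that because the Katok map is nonuniformly
hyperbolic only at a single point, one gets the product set and absolute
continuity on a positive-measure box essentially for free from the uniformly
hyperbolic behaviour on the complement of a neighbourhood of that point;
no Pesin-theoretic machinery on the whole manifold is needed. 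The main
obstacle, such as it is, is simply to quote (or briefly justify) that the
holonomies of $W^{ss}$ on such a box have the absolute-continuity property
of \Ref{DEFAbsCtsLocPrStr}; everything else is a direct appeal to the
results already proved.
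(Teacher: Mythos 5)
Your overall route is exactly the paper's: total ergodicity plus the homeomorphism of the invariant partitions with those of $F_{\left(\begin{smallmatrix}2&1\\1&1\end{smallmatrix}\right)}$ give a positive-measure $(ss,su)$-product set, absolute continuity of $W^{ss}$ on it lets you apply \Ref{THMAbstractMMix} via \Ref{CORAbstractMMix}, and \Ref{CORWsergodicimpliesmixing} finishes. The one point where you depart from the paper is the justification of absolute continuity, and that is the weak spot. You claim it comes ``essentially for free from the uniformly hyperbolic behaviour on the complement of a neighbourhood of the fixed point,'' with ``no Pesin-theoretic machinery'' needed. But that complement is not invariant: the local stable curve of a point $x$ in your box $V$, and the holonomy along $W^{ss}$ between two unstable transversals in $V$, are determined by the entire forward orbit of $x$, which returns arbitrarily close to the indifferent fixed point infinitely often and is contracted arbitrarily slowly during those excursions. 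So the ``classical picture'' of a uniformly hyperbolic box does not apply off the shelf, and for a general nonuniformly hyperbolic map with one bad point this step genuinely requires Pesin theory --- which is exactly what the paper's proof invokes here. Your sketch can be repaired for this particular example by using the explicit structure of Katok's construction (the slowdown preserves the two eigenfoliations, so the invariant partitions of the Katok map are the images of the linear ones under the conjugating map, which is a diffeomorphism away from the origin, and absolute continuity of the holonomies on a box avoiding the origin can then be read off directly), but that is a different, example-specific argument from the one you give; as written, the appeal to uniform hyperbolicity on a non-invariant set is a gap.
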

\begin{proof}
It is totally ergodic and the stable and unstable partitions are
homeomorphic to those of
$F_{\left(\begin{smallmatrix}2&1\\1&1\end{smallmatrix}\right)}$
(\Ref{EXKatokMap}), so there is a product neighborhood, which hence has
positive measure. Absolute continuity on this neighborhood follows from
Pesin theory, so we can apply \Ref{CORAbstractMMix}.
\end{proof}
In fact, the proofs of \Ref{THMAbstractMMix} and \Ref{CORAbstractMMix}
applied to the pieces of the ergodic decomposition of\/ $\mu$ yield:
\begin{corollary}
Let $X$ be a metric space, $\mu$ a Borel probability measure, $f\colon X\to
X$ $\mu$-preserving invertible, $i\in\{ss,cs\}$ such that
every point is in an $(i,su)$-product set where $W^i$ is absolutely continuous. If\/
$\varphi\colon X\to\R$ is $W^i$-saturated, then there is a measurable
$f$-invariant $n\colon X \to \N$ with $\varphi(f^{n(x)}(x)) = \varphi(x)$
a.e.
\end{corollary}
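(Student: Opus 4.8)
The plan is to reduce to the ergodic situation already handled by \Ref{THMAbstractMMix} and \Ref{CORAbstractMMix} by disintegrating $\mu$ into its $f$-ergodic components and re-running that argument on each piece. First I would get rid of integrability: replacing $\varphi$ by $\arctan\circ\varphi$ changes neither the hypothesis ($W^i$-saturation, witnessed by the same conull set) nor the conclusion (as $\arctan$ is injective, $\varphi\circ f^{n(x)}\aeq\varphi$ iff $\arctan\varphi\circ f^{n(x)}\aeq\arctan\varphi$), so we may take $\varphi\in L^\infty(\mu)$. Passing to $\supp\mu$, which we may take separable, and to a standard Borel model, we have the ergodic decomposition $\mu=\int_\Omega\mu_\omega\,d\nu(\omega)$ with $\omega\colon X\to\Omega$ measurable and $f$-invariant, and a countable family $\{V_k\}$ of $(i,su)$-product sets covering $X$ on each of which $W^i$ is absolutely continuous. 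Since $1=\mu(G)=\int\mu_\omega(G)\,d\nu$ for the conull witnessing set $G$ of $\varphi$, the function $\varphi$ is $W^i$-saturated with respect to $\mu_\omega$ for $\nu$-a.e.\ $\omega$.

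Fix such an $\omega$ and let $V\dfn V_k$ for the first $k$ with $\mu_\omega(V_k)>0$. As $f$ is $\mu_\omega$-ergodic, the Poincar\'e Recurrence Theorem gives $N_\omega\in\N$ with $\mu_\omega(f^{-N_\omega}(V)\cap V)>0$; writing $g\dfn f^{N_\omega}$, this is the hypothesis of \Ref{THMAbstractMMix} for $g$ --- except that $g$ need not be $\mu_\omega$-ergodic. So I would decompose $\mu_\omega=\frac1m\sum_{l=0}^{m-1}\nu_l$ into its finitely many ($m\mid N_\omega$) $g$-ergodic components, which $f$ permutes cyclically, relabelled so that $\nu_0$ charges $f^{-N_\omega}(V)\cap V$ (possible since $\mu_\omega=\frac1m\sum\nu_l$) and $\nu_l=f^l_*\nu_0$. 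Then \Ref{THMAbstractMMix} applies to $(X,\nu_l,g)$ with the product set $f^l(V)$: indeed $g$ is $\nu_l$-ergodic, $\nu_l\big(g^{-1}(f^lV)\cap f^lV\big)=\nu_0(f^{-N_\omega}V\cap V)>0$ since powers of $f$ commute, $W^i$ is the same partition for $g$ as for $f$ (as in the proof of \Ref{CORAbstractMMix}), and $W^i$ is absolutely continuous with respect to $\nu_l$ on $f^l(V)$ --- see the last paragraph. Hence $W^i$ is ergodic with respect to each $\nu_l$, so the bounded $W^i$-saturated $\varphi$ equals a constant $c_l$ $\nu_l$-a.e.; since $g$ preserves $\nu_l$, for $\nu_l$-a.e.\ $x$ both $x$ and $g(x)$ are $\nu_l$-typical, so $\varphi\circ g\aeq\varphi$ on $\nu_l$ and hence $\mu_\omega$-a.e.

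Finally I set $n(x)\dfn N_{\omega(x)}$: the selections above make $\omega\mapsto N_\omega$, hence $n$, measurable; $n$ is $f$-invariant because $\omega\circ f=\omega$; and integrating the relations $\varphi\circ f^{N_\omega}\aeq\varphi$ (each with respect to $\mu_\omega$) over $\nu$ yields $\varphi(f^{n(x)}(x))=\varphi(x)$ for $\mu$-a.e.\ $x$. This is exactly running the proofs of \Ref{THMAbstractMMix} and \Ref{CORAbstractMMix} inside the pieces of the ergodic decomposition, as announced.

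The step I expect to be the main obstacle --- and the only point that is not routine bookkeeping --- is the descent of absolute continuity of $W^i$ from $\mu$ to the component measures $\mu_\omega$ and $\nu_l$, which may be mutually singular and individually singular with respect to $\mu$, so that nothing transfers automatically. What makes it go through is that, by \Ref{THMHopfCoudene} applied to $f^{-1}$ (respectively $f^{-N_\omega}$), the characteristic function of each component is $W^{su}$-saturated; hence each component is carried, modulo $0$, by a union of the local $W^{su}$-plaques that constitute the fibres of the disintegration $\int_V\phi\,d\mu=\int_{\Wloc^i(z)}\int_{\Wloc^{su}(x)}\phi\,d\mu^{su}_x\,d\mu^i_z(x)$. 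Consequently this disintegration simply restricts to each component, with the \emph{same} conditional measures $\mu^{su}_x$ along the surviving plaques (and $\mu^i_z$ replaced by its conditional on the relevant transversal), so the single feature of absolute continuity used in \Ref{THMAbstractMMix} --- non-singularity of the $W^i$-holonomies $[\cdot,y]$ for the measures $\mu^{su}_x$ --- survives verbatim; and $f^l$, a measure isomorphism carrying the $(i,su)$-product structure on $V$ to one on $f^l(V)$, transports it further to $\nu_l$. Once this disintegration argument is written out, the proof is complete.
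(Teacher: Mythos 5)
Your proposal is correct and follows exactly the route the paper indicates: the paper offers no argument beyond the instruction to apply the proofs of \Ref{THMAbstractMMix} and \Ref{CORAbstractMMix} to the pieces of the ergodic decomposition of $\mu$, which is precisely what you carry out. The two points you isolate and resolve---the finite cyclic decomposition into $f^{N_\omega}$-ergodic components inside each $f$-ergodic component (replacing the total ergodicity used in \Ref{CORAbstractMMix}), and the descent of the $W^{su}$-disintegration to the possibly singular component measures via the $W^{su}$-saturation of invariant sets---are exactly the details the paper leaves to the reader.
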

\begin{lemma}
Absolute continuity of\/ $W^i$ on $V_f\dfn f^{-1}(V) \cap V$ implies absolute
continuity of\/ $T\colon V_f\to X$, $x\mapsto
T(x)\dfn[f(x),x]$, \ie $T_*\mu \ll \mu$.
\end{lemma}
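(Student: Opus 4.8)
Here the measure $T_*\mu$ means $T_*(\mu|_{V_f})$, so if $\mu(V_f)=0$ there is nothing to prove. Assume $\mu(V_f)>0$.

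The plan is to put product coordinates on $V$, to observe that in them $T$ is triangular — it fixes the $W^{su}$-leaf through each point and acts inside that leaf by one and the same map on every leaf — and then to push $\mu|_{V_f}$ forward leaf by leaf, the only genuine input being that $\mu$ is $f$-invariant. Fix $z\in V$ and write $v\in V$ as the pair $(p,q):=([z,v],[v,z])\in\Wloc^i(z)\times\Wloc^{su}(z)$, so $p$ labels the $W^{su}$-leaf through $v$, $q$ labels the $W^i$-leaf through $v$ (and the position of $v$ inside its $W^{su}$-leaf), and $[v,w]=(w_p,v_q)$ straight from the defining property of $[\cdot,\cdot]$. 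Since the $W^i$-leaves are $f$-invariant ($i\in\{ss,cs\}$ is defined dynamically) and, for a product set, may be taken coherent with $f$, the map $f$ sends each slice $\{(p,q):p\}\cap V_f$ of $V$ into a single $W^i$-slice of $V$; hence $(f(p,q))_q$ depends only on $q$, say $(f(p,q))_q=G(q)$, a Borel map on $E:=\proj_q(V_f)$. Then $T(p,q)=[f(p,q),(p,q)]=\bigl((p,q)_p,(f(p,q))_q\bigr)=(p,G(q))$.

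Absolute continuity of $W^i$ on $V_f$ supplies conditionals $\mu^{su}_p$ on the $W^{su}$-leaves and a transverse measure $\mu^i_z$ with $\int_V\phi\,d\mu=\int_{\Wloc^i(z)}\int_{\Wloc^{su}(p)}\phi\,d\mu^{su}_p\,d\mu^i_z(p)$; in these coordinates the holonomy $n\mapsto[n,y]$ is the identity in the $q$-variable, so the clause $\mu^{su}_x(N)=0\Rightarrow\mu^{su}_y([N,y])=0$ says the $\mu^{su}_p$ are pairwise equivalent, and we fix a representative $\lambda$ with $\mu^{su}_p=\rho_p\lambda$, $\rho_p>0$ $\lambda$-a.e. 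Restricting the Fubini formula to $V_f$ gives $\mu|_{V_f}=\int_p\mu^{su}_p|_{V_f^{(p)}}\,d\mu^i_z(p)$ (with $V_f^{(p)}=\{q:(p,q)\in V_f\}$), and since $T$ fixes $p$,
\[
T_*(\mu|_{V_f})=\int_p G_*\bigl(\mu^{su}_p|_{V_f^{(p)}}\bigr)\,d\mu^i_z(p),
\]
a disintegration over the $W^{su}$-partition against the same $\mu^i_z$ that appears in $\mu|_V=\int_p\mu^{su}_p\,d\mu^i_z(p)$. Hence $T_*(\mu|_{V_f})\ll\mu$ provided $G_*(\mu^{su}_p|_{V_f^{(p)}})\ll\mu^{su}_p$ for $\mu^i_z$-a.e.\ $p$; and since $G$ is the same map on every leaf and $V_f^{(p)}\subset E$, this reduces to the single statement $G_*(\lambda|_E)\ll\lambda$.

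For that, let $M\subset\Wloc^{su}(z)$ be $\lambda$-null. The cylinder $\widehat M:=\Wloc^i(z)\times M\subset V$ is $\mu$-null (by the Fubini formula, $\mu^{su}_p(M)=\int_M\rho_p\,d\lambda=0$), so $\mu(f^{-1}(\widehat M))=0$ by $f$-invariance; but $f^{-1}(\widehat M)\cap V_f=\{(p,q)\in V_f:G(q)\in M\}=\{(p,q)\in V_f:q\in G^{-1}(M)\}$. Having first discarded the $\mu$-null part of $V_f$ lying over those $q$ with $\mu^i_z$-null $p$-fibre (a change affecting neither $T_*(\mu|_{V_f})$ nor $E$ up to $\lambda$-nullity), every $q\in E$ has a $\mu^i_z$-positive $p$-fibre, and then Tonelli — using $\mu^{su}_p=\rho_p\lambda$ with $\rho_p>0$ — forces $\mu(\{(p,q)\in V_f:q\in G^{-1}(M)\})>0$ as soon as $\lambda(G^{-1}(M)\cap E)>0$. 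Comparing this with $\mu(\{(p,q)\in V_f:q\in G^{-1}(M)\})=\mu(f^{-1}(\widehat M)\cap V_f)\le\mu(f^{-1}(\widehat M))=0$ gives $G_*(\lambda|_E)(M)=\lambda(G^{-1}(M)\cap E)=0$, as required.

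The routine parts are the product chart, the identity $[v,w]=(w_p,v_q)$, and the joint measurability of $\rho_p(q)$, $V_f$ and $G$ needed for the Tonelli exchanges, together with checking that for a product set the $\Wloc^i$-versus-$W^i$ distinction is harmless on $V_f$ (which is what licenses ``$f$ sends $W^i$-slices into $W^i$-slices'', hence the $p$-independence of $G$). The step I expect to cost the most is the last one, converting ``$f$-invariance annihilates cylinders over $\lambda$-null sets'' into non-singularity of $G$ along a single leaf; the two devices that make it go through are the thick-fibre reduction and the fact that $G$ does not depend on the leaf, which is precisely what lets one replace the leaf-dependent sets $V_f^{(p)}$ by the one set $E$.
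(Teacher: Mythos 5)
Your argument is correct in substance and rests on the same two pillars as the paper's proof: the observation that $T(x)=[f(x),x]\in\Wloc^{su}(x)$, so $T$ preserves the $W^{su}$-leaves and acts on each leaf as the measure-preserving $f$ followed by a $W^i$-holonomy, together with the Fubini-type disintegration from \Ref{DEFAbsCtsLocPrStr}, which converts leafwise non-singularity into global non-singularity. The executions differ, though. The paper works directly with the conditional measures: for a null set $N$ the conditional of $N$ on a.e.\ leaf vanishes, hence by $f$-invariance and holonomy absolute continuity so does the conditional of $T(N)$, and integrating over leaves finishes. You instead coordinatize $V$ as $\Wloc^i(z)\times\Wloc^{su}(z)$, note the triangular form $T(p,q)=(p,G(q))$ with $G$ independent of the leaf, and reduce everything to non-singularity of the single transversal map $G$ against a reference measure $\lambda$, extracted from $f$-invariance applied to cylinders $\Wloc^i(z)\times M$ via your thick-fibre/Tonelli step. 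This buys two things: the leaf-independence of $G$ replaces a ``for a.e.\ leaf'' statement by one statement on one transversal, and---more importantly---you prove the pullback direction $\mu(A)=0\Rightarrow\mu(T^{-1}A)=0$, which is literally what $T_*\mu\ll\mu$ asserts, whereas the paper's displayed computation shows that $T$ maps null sets to null sets; the two agree only after invoking the (implicit) bi-measurable injectivity of $T$. The price is the extra bookkeeping you flag yourself: the coherence of the local leaves under $f$ needed for $G$ to be well defined (the paper's definition of a product set does not state this, but its own proof needs the analogous fact), the joint measurability of $\rho_p(q)$, and the observation that discarding the null part of $V_f$ over thin fibres changes each $V_f^{(p)}$ only by a $\lambda$-null set for a.e.\ $p$ (your parenthetical about $E$ changing ``up to $\lambda$-nullity'' is the slightly imprecise way of saying this, but the reduction survives). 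At the paper's level of rigor these are acceptable, so I regard the proof as correct.
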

\begin{proof}
If\/ $N\subset V_f$ and $\mu(N)=0$, then there is a $\Wloc^{su}$-saturated null set
$N_W$ such that for $z\notin N_W$ we have $\int_{W^{su}(z)}\chi_N\;d\mu^{su}_z=0$
as well as, by $f$-invariance of\/ $\mu$ and absolute continuity,
$\int_{W^{su}(z)}\chi_{T(N)}\;d\mu^{su}_z=0$. Then
\begin{align*}
\int\chi_{T(N)}\;d\mu
&=
\int_{\Wloc^i(z)\smallsetminus N_W}\int_{\Wloc^{su}(x)}\chi_{T(N)}\;d\mu^{su}_x\;d\mu^i_z(x)
+
\int_{N_W}\chi_{T(N)}\;d\mu
\\&=
\int_{\Wloc^i(z)\smallsetminus N_W}0\;d\mu^i_z(x)
+
0
.\qedhere\end{align*}
\end{proof}
We adapt an idea of Thouvenot \cite[Theorem 1]{Thouvenot}, \cite[Exercice
  7, p.\ 50]{CoudeneBook}, \cite[Proposition 1.2]{CoudeneEns}: $d(
f^{-n}(x), f^{-n}(T(x)) )\to0$ pointwise on $ V_f\cap T^{-1}V_f$, hence by
the Egorov theorem uniformly on some $U\subset V_f\cap T^{-1}V_f$ with
$\mu(U)>0$. Then $T_n\dfn\begin{cases}f^{-n} \circ T \circ f^{n}&\text{on
}f^{-n}(U)\\\Id&\text{elsewhere}\end{cases}\xrightarrow[n\to\infty]{\text{\Tiny
    pointwise}}\Id$, and $T_n$ has Radon--Nikodym derivative
$g_n\dfn\Big[\frac{d {T_n}_*
    \mu}{d\mu}\Big]=\Big[\frac{dT_*\mu}{d\mu}\Big]\circ f^{n}$ on
$f^{-n}(U)$ (and 1 elsewhere); this is uniformly integrable, \ie
$\sup_{n\in\N}\int_{\{g_n > M\}} g_n d\mu\lto{M\to\infty}0$.
\begin{lemma}\Label{LEMAbstractMMix}
Let $X$ be a metric space with probability measure $\mu$, $T_n\colon X\to
X$ such that $T_n\to\Id$ a.e., ${T_n}_* \mu \ll \mu$, and $g_n\dfn\Big[\frac{d {T_n}_*
\mu}{d\mu}\Big]$ is uniformly integrable. Then $\|\varphi
\circ T_n-\varphi\|_1\lto{n\to\infty}0$ for all\/ $\varphi \in L^\infty$.
($\|\cdot\|_p$ denotes the $L^p$-norm.)
\end{lemma}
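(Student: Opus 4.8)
The plan is to reduce the $L^1$ convergence to a combination of pointwise convergence (given), dominated convergence on a good set, and uniform integrability to control the bad set. First I would fix $\varphi\in L^\infty$ and write
\[
\|\varphi\circ T_n-\varphi\|_1\le\int|\varphi\circ T_n-\varphi|\d\mu,
\]
and observe that since $\varphi$ need not be continuous, $T_n\to\Id$ a.e.\ does \emph{not} immediately give $\varphi\circ T_n\to\varphi$ a.e.; this is the point where the Radon--Nikodym hypothesis has to enter. The natural device is Lusin: approximate $\varphi$ by a function that is continuous on a large set.

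Concretely, given $\ee>0$, use the Lusin theorem to find a closed set $K\subset X$ with $\mu(X\smallsetminus K)<\ee$ and $\varphi\rest K$ uniformly continuous, and extend $\varphi\rest K$ to a bounded \emph{continuous} $\tilde\varphi$ on $X$ with $\|\tilde\varphi\|_\infty\le\|\varphi\|_\infty$ (Tietze). Then split
\[
\|\varphi\circ T_n-\varphi\|_1
\le\|\varphi\circ T_n-\tilde\varphi\circ T_n\|_1+\|\tilde\varphi\circ T_n-\tilde\varphi\|_1+\|\tilde\varphi-\varphi\|_1.
\]
The third term is $\le2\|\varphi\|_\infty\,\mu(X\smallsetminus K)<2\|\varphi\|_\infty\ee$. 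The middle term tends to $0$ as $n\to\infty$ by bounded convergence, since $\tilde\varphi$ is continuous and $T_n\to\Id$ a.e., so $\tilde\varphi\circ T_n\to\tilde\varphi$ a.e.\ with a uniform bound $\|\tilde\varphi\|_\infty$. The first term is where uniform integrability does the work: $\varphi$ and $\tilde\varphi$ agree on $K$, so
\[
\|\varphi\circ T_n-\tilde\varphi\circ T_n\|_1
=\int_{T_n^{-1}(X\smallsetminus K)}|\varphi\circ T_n-\tilde\varphi\circ T_n|\d\mu
\le2\|\varphi\|_\infty\int\chi_{X\smallsetminus K}\circ T_n\d\mu
=2\|\varphi\|_\infty\,{T_n}_*\mu(X\smallsetminus K).
\]
Now ${T_n}_*\mu(X\smallsetminus K)=\int_{X\smallsetminus K}g_n\d\mu$, and uniform integrability of $\{g_n\}$ gives a $\delta>0$ with $\sup_n\int_E g_n\d\mu<\ee$ whenever $\mu(E)<\delta$; choosing $K$ at the outset so that also $\mu(X\smallsetminus K)<\delta$ makes the first term $<2\|\varphi\|_\infty\ee$ for \emph{all} $n$.

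Putting the three bounds together, $\limsup_{n\to\infty}\|\varphi\circ T_n-\varphi\|_1\le4\|\varphi\|_\infty\ee$, and since $\ee>0$ was arbitrary the limit is $0$. The main obstacle I anticipate is purely the bookkeeping of choosing $K$ to satisfy the two smallness requirements simultaneously (the Lusin $\ee$ and the uniform-integrability $\delta$) before letting $n\to\infty$; once the quantifiers are ordered correctly — fix $\ee$, extract $\delta$ from uniform integrability, choose $K$ with $\mu(X\smallsetminus K)<\min(\ee,\delta)$, then send $n\to\infty$ — everything else is the routine three-term triangle-inequality estimate. One should note that the displayed identity ${T_n}_*\mu(E)=\int_E g_n\d\mu$ is exactly the definition of $g_n$ as the Radon--Nikodym derivative, so no continuity or measurability subtlety arises there.
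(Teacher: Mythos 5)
Your proof is correct and follows essentially the same route as the paper's: approximate $\varphi$ by a bounded continuous function, use the three-term triangle inequality, handle the middle term by bounded convergence, and transfer the error terms through $g_n$ via ${T_n}_*\mu\ll\mu$ and uniform integrability. The only cosmetic differences are that you make the continuous approximant explicit via Lusin--Tietze and invoke uniform integrability in its equivalent ``$\mu(E)<\delta\Rightarrow\sup_n\int_E g_n\,d\mu<\ee$'' form (the paper instead splits on $\{g_n>M\}$, which is exactly how one derives that form), so nothing essential is changed.
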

\begin{proof}
If\/ $\psi$ is continuous with $\|\psi\|_\infty\le\|\varphi\|_\infty$, then
\[
\|\varphi \circ T_n - \varphi\|_1 \leq
\|(\varphi - \psi) \circ T_n\|_1 + \|\psi \circ T_n -
\psi\|_1+ 
\|\psi-\varphi\|_1,
\]
and $\|\psi\circ T_n-\psi\|_1\lto{n\to\infty}0$ by the Bounded Convergence Theorem.
For $\epsilon>0$, uniform integrability provides an $M$ such that  the last
summand in
\[
\|(\varphi - \psi) \circ T_n\|_1 = \int |\psi-\varphi|_1 g_n d\mu
\leq
M \|\psi-\varphi\|_1 +
2\|\varphi\|_\infty  \int_{\{g_n > M\}} g_n d\mu
\]
is less than $\epsilon/2$. Choose $\psi$ such that
$\|\psi-\varphi\|_1<\frac{\epsilon/2}{M+1}$.
\end{proof}
\begin{proof}[Proof of \Ref{THMAbstractMMix}]
Let\/ $\varphi\in L^\infty$ be $W^i$-saturated. We show $\varphi$ is
$f$-invariant. If $\varepsilon>0$, then $T_n(x) \in W^i(f(x))$ for all\/ $x\in
f^{-n}(U)$ implies that
\begin{equation}\label{eqsaturationgiveszerolimit}
\mu\Bigl(f^{-n}(U) \cap\big\{|\varphi\circ f - \varphi|> \varepsilon\big\}\Bigr)
= 
\mu\Bigl(f^{-n}(U) \cap\big\{|\varphi \circ T_n - \varphi| > \varepsilon\big\}\Bigr)
\xrightarrow[n\to\infty]{\text{(\Ref{LEMAbstractMMix})}}0.
\end{equation}
With $B\dfn\big\{|\varphi\circ f-\varphi|>\varepsilon\big\}$, the Mean
Ergodic Theorem and ergodicity of\/ $f$ imply
\[
\frac{1}{n}\sum^{n-1}_{k=0}\chi_U\circ f^k\xrightarrow[n\to\infty]{L^2}\mu(U),
\quad\text{
hence
}\quad
\frac{1}{n}\sum^{n-1}_{k=0}\chi_U\circ f^k\chi_B\xrightarrow[n\to\infty]{L^2}\mu(U)\chi_B,
\]
so
$0\xleftarrow[n\to\infty]{\eqref{eqsaturationgiveszerolimit}}\frac{1}{n}\sum^{n-1}_{k=0}\mu\Bigl(f^{-n}(U)\cap\big\{|\varphi\circ
T_n-\varphi|>\varepsilon\big\}\Bigr)\lto{n\to\infty}\mu(U)\,\mu(B)$. Since
$\mu(U)>0$, we have $\mu(B)=0$. $\epsilon$ was arbitrary, so $\varphi$ is
$f$-invariant, hence constant a.e.
\end{proof}


\begin{thebibliography}{19}
\bibitem{Anosov}
Dmitry V. Anosov: \emph{Geodesic flows on closed Riemann manifolds with negative
curvature}. Trudy Mat. Inst. Steklov {\bf90}, (1967), Providence, R.I.: American Mathematical Society, 1969

\bibitem{BarreiraPesin07}
Luis Barreira, Yakov B. Pesin:
\emph{Nonuniform hyperbolicity: : Dynamics of systems with nonzero
  Lyapunov exponents}, Encyclopedia of Mathematics and its Applications
     {\bf115}. Cambridge University Press, New York, 2007

\bibitem{BarreiraPesin13}
Luis Barreira, Yakov B. Pesin:
\emph{Introduction to smooth ergodic theory}, Graduate Studies in
Mathematics {\bf148}. American Mathematical Society, Providence, RI, 2013

\bibitem{Bowen}
Rufus Bowen: \emph{Equilibrium states and the ergodic theory of Anosov
diffeomorphisms}. Springer Lecture Notes in Mathematics 470 (1975, 2008): viii+75

\bibitem{BrinPesin}
Michael I. Brin, Yakov B. Pesin: \emph{Partially hyperbolic dynamical systems}. Izv.
Akad. Nauk SSSR Ser. Mat. {\bf38} (1974), 170--212, transl.
Math. USSR Izv. {\bf8} (1974), 177--218

\bibitem{BrinStuck}
Michael Brin, Garrett Stuck: \emph{Introduction to dynamical systems}.
Cambridge University Press, 2002

\bibitem{Babillot}
Martine Babillot: \emph{On the mixing property for hyperbolic systems}. Israel
Journal of Mathematics {\bf129}, (2002), 61--76

\bibitem{BunimovichSinai}
Leonid A. Bunimovich, Yakov G. Sina\u\i: \emph{The fundamental theorem of the theory of
scattering billiards}. Mat. Sb. (N.S.) {\bf90} (132), (1973), 415--431, 479

\bibitem{BurnsWilkinson}
Keith Burns, Amie Wilkinson: \emph{On the ergodicity of partially
hyperbolic systems}. Annals of Mathematics. Second Series {\bf171}, no.\ 1
(2010), 451--489

\bibitem{ChernovMakarian}
Nikolai Chernov, Roberto Makarian:
\emph{Chaotic billiards}. Mathematical Surveys and Monographs {\bf127},
American Mathematical Society 2006

\bibitem{ChernovTroubetzkoy}
Nikolai Chernov, Serge Troubetzkoy:
\emph{Ergodicity of billiards in polygons with pockets}.
Nonlinearity {\bf11}, (1998), 1095--1102

\bibitem{CoudeneHDR}
Yves Coud\`ene:
\emph{G\'eometrie ergodique}, Habilitation \`a diriger les
recherches, 2008

\bibitem{Coudene}
Yves Coud\`ene: \emph{On invariant distributions and mixing}. Ergodic Theory and
Dynamical Systems {\bf27}, no.\ 1, (2007), 109--112

\bibitem{CoudeneBook}
Yves Coud\`ene:
\emph{Th\'eorie ergodique et syst\`emes dynamiques},
EDP Sciences, 2013

\bibitem{CoudeneEns} Yves Coud\`ene: \emph{Sur le m\'elange du flot
  g\'eod\'esique}, in G\'eom\'etrie ergodique, L'Enseignement
  math\'ematique, Monographie {\bf43}, Fran\c coise Dal'Bo, ed., 2013

\bibitem{Glasner}
Eli Glasner:
\emph{Ergodic theory via joinings}. 
Mathematical Surveys and Monographs, {\bf101}. American Mathematical Society,
Providence, RI, 2003

\bibitem{Katok}
Anatole Katok: 
\emph{Bernoulli diffeomorphisms on surfaces}. Ann. of Math. (2),
{\bf110}(3), (1979), 529--547

\bibitem{KatokHasselblatt}
Anatole Katok, Boris Hasselblatt:
\emph{Introduction to the modern theory of dynamical systems}, 
Encyclopedia of Mathematics and its Applications \textbf{54},
Cambridge University Press, 1995

\bibitem{PesinSentiZhang}
Yakov B. Pesin, Samuel Senti, Ke Zhang: 
\emph{Thermodynamics of the Katok
map}. Preprint, 2014.

\bibitem{Plante}
Joseph F. Plante: \emph{Anosov flows}. American Journal of Mathematics {\bf94}
(1972), 729--754

\bibitem{Sinai}
Yakov G. Sina\u\i:
\emph{Dynamical systems with elastic reflections. Ergodic
properties of dispersing billiards}. Uspehi Mat. Nauk {\bf25}, (1970), no.\
2 (152), 141--192

\bibitem{Thouvenot}
Jean-Paul Thouvenot:
\emph{Some properties and applications of joinings in ergodic theory}. In 
Ergodic theory and its connections with harmonic analysis 
(Alexandria, 1993), 207--235,
London Math. Soc. Lecture Note Ser., 205, 
Cambridge Univ. Press, Cambridge, 1995

\bibitem{Walters}
Peter Walters: \emph{Topological conjugacy of affine transformations of tori}.
Transactions of the American Mathematical Society {\bf131} (1968) 40--50

\bibitem{Walters2}
Peter Walters: \emph{Topological conjugacy of affine transformations of
compact abelian groups}. Trans. Amer. Math. Soc. {\bf140}, (1969), 95--107

\end{thebibliography}
\end{document}